\documentclass[pdflatex,sn-mathphys-num]{sn-jnl}
\usepackage{amsmath,amssymb,amsthm}
\usepackage{graphicx}
\usepackage{booktabs}
\usepackage{cleveref}
\usepackage{enumitem}
\usepackage{xcolor}
\usepackage{array}
\usepackage{etoolbox}
\usepackage{placeins}
\hypersetup{hidelinks}
\makeatletter
\patchcmd{\@maketitle}
  {Corresponding author(s). E-mail(s):}
  {Corresponding author. E-mail:}
  {}{\ClassError{sn-jnl}{Could not update corresponding-author labels}{}}
\makeatother

\theoremstyle{thmstyleone}
\newtheorem{theorem}{Theorem} 
 
\newtheorem{assumption}{Assumption} 
\newtheorem{corollary}[theorem]{Corollary} 
 
\newtheorem{proposition}[theorem]{Proposition} 
\newtheorem{algorithm}{Algorithm}

\theoremstyle{thmstyletwo}

\newtheorem{example}{Example}

\theoremstyle{thmstylethree}

\crefname{assumption}{Assumption}{Assumptions}
\Crefname{assumption}{Assumption}{Assumptions}
\crefname{algorithm}{Algorithm}{Algorithms}
\Crefname{algorithm}{Algorithm}{Algorithms}

\newcommand{\R}{\mathbb{R}}
\newcommand{\norm}[1]{\left\lVert#1\right\rVert}
\newcommand{\ip}[2]{\left\langle#1,#2\right\rangle}
\newcommand{\vect}[1]{\boldsymbol{#1}}
\newcommand{\mat}[1]{\mathbf{#1}}

\begin{document}
\raggedbottom
\allowdisplaybreaks[0]

\title{Cost-Aware Iteration Allocation for Sequential Multi-Fidelity First-Order Optimization}

\author[2]{\fnm{Amir} \sur{Ardestani-Jaafari}}
\author*[1]{\fnm{Warren} \sur{Hare}}\email{warren.hare@ubc.ca}
\author[1]{\fnm{Zhongda} \sur{Huang}}

\affil[1]{\orgdiv{Department of Mathematics},
\orgname{University of British Columbia},
\orgaddress{\street{3187 University Way}, \city{Kelowna},
\state{BC}, \postcode{V1V 1V7}, \country{Canada}}}
\affil[2]{\orgdiv{Faculty of Management},
\orgname{University of British Columbia},
\orgaddress{\street{1137 Alumni Avenue}, \city{Kelowna},
\state{BC}, \postcode{V1V 1V7}, \country{Canada}}}

\abstract{
Optimization can be expensive when evaluating the objective and its gradient
requires a large dataset or a costly simulation. A cheaper approximation
(i.e., a surrogate) can provide a useful initial point, but the work spent
on that surrogate must be justified by the saving on the original problem.
We study how many iterations to perform on each surrogate before switching
to a more accurate objective. For a given sequence of smooth, strongly convex objectives,
we use convergence bounds and distances between minimizers to construct a
model of the total iteration cost. We derive closed-form two-level budgets
for Nesterov's accelerated gradient method and a convex reformulation for
multiple levels. We also develop an online switching rule that updates the
cost comparison using the gradient at the current point. We test the
techniques on a logistic-regression problem using HIGGS data.
The online rule reduces weighted iteration cost in all 27 tested
configurations, with a median cost of 52.8\% of direct optimization.
The results illustrate why updating the switching decision can be more
effective than fixing the budget from an initial convergence bound.}

\keywords{multi-fidelity optimization, warm start, iteration allocation,
accelerated gradient method}

\maketitle

\section{Introduction}
An optimization method may require many evaluations of an objective and
its gradient before it finds an accurate solution. When each evaluation
involves a large dataset or a costly simulation, these evaluations can
dominate the computational cost. One way to reduce this cost is to begin
with a cheaper approximation of the objective. For example, an empirical
loss can be evaluated on a subset of the data, or a simulation can use a
coarser discretization. We call the original objective the
\emph{high-fidelity objective} and its cheaper approximation a
\emph{low-fidelity objective}.

A solution of the low-fidelity problem need not solve the high-fidelity
problem, but it may provide a better initial point. Using the output of
one optimization problem as the initial point for another is called a
\emph{warm start}. We consider a sequential use of warm starts. An
optimization method first performs some iterations on the low-fidelity
objective and then switches to the high-fidelity objective, starting from
the point it has reached. The same idea can be applied to a sequence of
objectives of increasing fidelity.

The main question is when to switch. Switching too early may give little
improvement in the initial point. Switching too late may waste iterations
solving an approximation whose minimizer differs from the solution we
actually need. A successful choice must account for both the cost per
iteration and the reduction in later work. We refer to the number of
iterations assigned to an objective as its \emph{iteration budget}, and
to the choice of these budgets as \emph{iteration allocation}.

In this paper, we study this choice for a given sequence of smooth,
strongly convex objectives. Each objective has a unique minimizer, and
we assume bounds on the distances between minimizers at consecutive
levels. Convergence bounds then describe how the iterations at one
level affect the initial error at the next. Combining these bounds with
the cost per iteration gives a model for the total cost of reaching a
specified accuracy on the high-fidelity objective. The model also lets
us compare a warm start with optimizing the high-fidelity objective
directly from the original initial point.

We first derive the cost model for general polynomial convergence bounds.
For two levels, allowing real-valued budgets gives a strictly convex
problem with a unique minimizer when the initial low-fidelity error is
nonzero. We then specialize the analysis to Nesterov's accelerated
gradient method (AGD). This gives closed-form budgets for two levels and
a convex reformulation for multiple levels. These results explain how
iteration costs, convergence rates, and distances between minimizers
affect the allocation.

The minimizers are usually unknown, so the theoretical formulas cannot
be used directly in real-world applications. Strong convexity provides upper bounds on the
distance to a minimizer from the gradient at a known point. We use these
bounds in two ways. The first fixes the low-fidelity budget at the
initial point. The second updates the cost comparison after each
low-fidelity iteration and switches when further low-fidelity work is
no longer favored by the model. We test both choices on a suite of 27
logistic-regression problems built from one sample of the HIGGS data. The initial fixed budgets
continue low-fidelity optimization after it provides little further
improvement to the high-fidelity initial point. They cost more than
direct AGD in all 27 problems. The online rule switches
earlier and reduces weighted iteration cost in every configuration,
with a median cost of 52.8\% of direct AGD.

\Cref{sec:related_work} reviews related work, and \Cref{sec:setting}
states the assumptions. \Cref{sec:two_level,sec:multi_level} develop the
allocation theory. \Cref{sec:experiments} presents the experiments, and
\Cref{sec:conclusion} discusses the findings, limitations, and future work.

\section{Related Work}
\label{sec:related_work}
Multi-fidelity optimization aims to reduce the cost of solving
expensive problems by making use of cheaper
approximations~\cite{fernandez2019,peherstorfer2018}.

Allocation across fidelity levels also appears in Monte Carlo estimation,
where samples are assigned according to variances and costs, either to
meet an error tolerance or to minimize the error for a given
budget~\cite{giles2008,peherstorfer2018}. In optimization,
classical trust-region frameworks establish convergence to first-order
stationary points of the high-fidelity problem by requiring the
low-fidelity model to match the value and gradient of the high-fidelity
model at each iterate~\cite{alexandrov1998,alexandrov2001}; when
high-fidelity derivatives are unavailable, fully linear models replace
this condition~\cite{march2012}.

Leary, Bhaskar, and Keane \cite{leary2003} use cheap-model outputs to build
neural-network and kriging approximations of expensive functions. Zhan
and Xing \cite{zhan2020} review expected-improvement methods, including
extensions that select evaluations across multiple fidelities.

Wu, Mader, and Martins \cite{wu2022} develop sequential gradient-based
multi-fidelity optimization for multidisciplinary design without a global
surrogate. Werner, Overton, and Peherstorfer \cite{werner2023} use
low-fidelity stages to initialize high-fidelity gradient sampling for
nonsmooth, nonconvex robust control.
\begin{table}[!b]
 \centering
 \caption{Use of low-fidelity information and basis for allocating work
 in selected related methods.}
 \label{tab:related_allocation}
 \small\setlength{\tabcolsep}{4pt}\renewcommand{\arraystretch}{1.22}
 \begin{tabular}{@{}
  >{\raggedright\arraybackslash}p{0.23\linewidth}
  >{\raggedright\arraybackslash}p{0.33\linewidth}
  >{\raggedright\arraybackslash}p{\dimexpr0.44\linewidth-16pt\relax}@{}}
  \toprule
  Method & Use of low-fidelity information & Basis for allocating work \\
  \midrule
  Mokhtari and Ribeiro~\cite{mokhtari2017} &
  Warm starts across increasing sample sizes &
  Statistical accuracy of each sampled problem \\
  \midrule
  Wu et al.~\cite{wu2022} &
  Sequential optimization with adaptive fidelity selection &
  Model-error-based stopping and error reduction per cost \\
  \midrule
  Werner et al.~\cite{werner2023} &
  Warm starts across gradient-sampling stages &
  Stationarity and sampling-radius tolerances \\
  \midrule
  \textbf{This paper} &
  Warm starts within a prescribed objective sequence &
  Minimization of a total-cost model for a prescribed final accuracy \\
  \bottomrule
 \end{tabular}
\end{table}

Sample-size adaptation is closely related to our logistic-regression
experiment. Varying-sample methods control gradient error or sample
variance as the amount of data increases \cite{byrd2012,friedlander2012}.
Dynamic sample-size and batch-expansion methods solve increasingly large
empirical-risk problems~\cite{daneshmand2016,mokhtari2017,derezinski2018};
some solve each problem to its statistical accuracy and grow the sample
geometrically~\cite{mokhtari2017,derezinski2018}. Adaptive second-order
methods use the same principle \cite{eisen2018}. Our hierarchy need not
be defined by sample size.

Kreji\'{c} and Krklec Jerinki\'{c} \cite{krejic2019} balance approximation
and optimization errors in a stochastic spectral projected gradient
method. They combine nonmonotone line search with sample-size updates
and show, without an explicit sample-growth condition, that the sample
size grows without bound and that, almost surely, some accumulation
point is stationary. Their approximation error is measured against an
expectation objective, whereas our bounds compare minimizers of fixed
objectives.

Warm starts also reduce the cost of related subproblems in global
optimization. Kuno \cite{kuno2022} modifies the convex relaxations in a
rectangular branch-and-bound method for difference-of-convex optimization
to preserve feasibility of the preceding solution, then solves them using
Frank--Wolfe iterations.

Finally, Chen, Shi, and Yuan \cite{chen2025} analyze Nesterov acceleration
for strongly convex functions using high-resolution differential
equations and Lyapunov analysis, extending the convergence argument to
a step size equal to the inverse of the gradient's Lipschitz constant.
Their focus is the rate on a single objective.

\Cref{tab:related_allocation} compares how closely related methods
allocate work across levels.

\section{Problem Setting}
\label{sec:setting}

\subsection{Fidelity levels}
Let \(M\geq2\), and let
\[
  f^{(m)}:\R^d\rightarrow\R,\qquad m=1,\ldots,M,
\]
be a given sequence of objectives from lowest to highest fidelity, with all
objectives defined on the same space. We use bold symbols for vectors
and matrices. All norms and inner products are Euclidean.

\begin{assumption}[Smooth strong convexity]
\label{ass:smooth_sc}
For every \(m\), \(0<\mu_m\leq L_m\), and \(f^{(m)}\) is
differentiable, \(L_m\)-smooth, and \(\mu_m\)-strongly convex. This means that, for all
\(\vect{x},\vect{y}\in\R^d\),
\begin{align}
  \norm{\nabla f^{(m)}(\vect{x})-\nabla f^{(m)}(\vect{y})}
  &\leq L_m\norm{\vect{x}-\vect{y}}, \label{eq:smoothness}\\
  f^{(m)}(\vect{y})
  &\geq f^{(m)}(\vect{x})+\ip{\nabla f^{(m)}(\vect{x})}{\vect{y}-\vect{x}}
       +\frac{\mu_m}{2}\norm{\vect{y}-\vect{x}}^2. \label{eq:strong_convexity}
\end{align}
Thus \(f^{(m)}\) has a unique minimizer \(\vect{x}^{*(m)}\).
\end{assumption}

The final goal is to return \(\vect{x}\) satisfying
\begin{equation}
  f^{(M)}(\vect{x})-f^{(M)}\bigl(\vect{x}^{*(M)}\bigr)\leq\varepsilon,
  \label{eq:target}
\end{equation}
for a given \(\varepsilon>0\).
We call the difference on the left the \emph{objective gap}.

\begin{assumption}[Distances between minimizers]
\label{asm2}
For each pair of consecutive levels, a nonnegative constant \(\Delta_m\)
satisfies
\[
  \norm{\vect{x}^{*(m)}-\vect{x}^{*(m+1)}}\leq\Delta_m,
  \qquad m=1,\ldots,M-1.
\]
\end{assumption}
We require neither ordered objective values nor decreasing distances
between consecutive minimizers.
\Cref{sec:certificates} gives computable choices of \(\Delta_m\).

\subsection{Generic convergence-rate model}

Let \(\vect{x}_k^{(m)}\) denote the point obtained after \(k\) iterations of a chosen optimization method on \(f^{(m)}\), initialized at \(\vect{x}_0^{(m)}\).
One iteration is one update of the chosen
method, with cost \(c_m\) at level \(m\). Typically,
\[
  0<c_1\leq c_2\leq\cdots\leq c_M,
\]
although the analysis only requires positive costs.

\begin{assumption}[Polynomial convergence bounds]
\label{ass:rates}
For every level \(m\), there exist constants \(A_m,B_m>0\) and exponents
\(p,q>0\) such that, for every integer \(k\geq1\) and every initial
point \(\vect{x}_0^{(m)}\in\R^d\),
\begin{align}
 f^{(m)}\bigl(\vect{x}_k^{(m)}\bigr)
 -f^{(m)}\bigl(\vect{x}^{*(m)}\bigr)
 &\leq \frac{A_m}{k^{p}}\norm{\vect{x}_0^{(m)}-\vect{x}^{*(m)}}^2,
 \label{eq:function_rate}\\
 \norm{\vect{x}_k^{(m)}-\vect{x}^{*(m)}}
 &\leq \frac{B_m}{k^{q}}\norm{\vect{x}_0^{(m)}-\vect{x}^{*(m)}}.
 \label{eq:distance_rate}
\end{align}
\end{assumption}

The exponents \(p\) and \(q\) are the same at every level. The constants
\(A_m\) and \(B_m\) may differ because the objectives can have different
smoothness and strong-convexity constants.

\begin{example}[Accelerated gradient specialization]
\label{ex:agd_rates}
Consider Nesterov's accelerated gradient method with step size
\(1/L_m\) and the iteration-dependent momentum stated in
\Cref{subsec:experiment_algorithm}. The convergence result in
\cite{nesterov1983,beck2009} gives
\[
 f^{(m)}\bigl(\vect{x}_k^{(m)}\bigr)-f^{(m)}\bigl(\vect{x}^{*(m)}\bigr)
 \leq \frac{2L_m}{(k+1)^2}\norm{\vect{x}_0^{(m)}-\vect{x}^{*(m)}}^2
 \leq \frac{2L_m}{k^2}\norm{\vect{x}_0^{(m)}-\vect{x}^{*(m)}}^2.
\]
Combining this bound with strong convexity gives \Cref{ass:rates} with
\[
 A_m=2L_m,\qquad p=2,\qquad
 B_m=2\sqrt{\frac{L_m}{\mu_m}},\qquad q=1.
\]
\end{example}

\section{Two-Level Cost Analysis}
\label{sec:two_level}
Additional low-fidelity iterations incur a cost but may reduce the
high-fidelity work needed to reach the target accuracy. To determine
how much low-fidelity work to allocate, we first bound the number of
high-fidelity iterations required after switching. Combining this
bound with the cost of the low-fidelity iterations gives a total-cost
model. We then minimize this model over the low-fidelity budget and
compare its minimum with the cost model for direct high-fidelity
optimization.

\subsection{Iterations required after switching}
\begin{theorem}[Two-level warm-start convergence]
\label{thm:two_level_warmstart_poly}
Under \Cref{ass:smooth_sc,asm2,ass:rates}, run \(K_1\geq1\) integer
iterations on \(f^{(1)}\) from \(\vect{x}_0\). At a switch from the lower
level, initialize level 2 at \(\vect{x}_{K_1}^{(1)}\) and reset any
momentum or other algorithmic state. Define the sufficient
high-fidelity budget
\begin{equation}
 \overline K_2(K_1):=\left(\frac{A_2}{\varepsilon}\right)^{1/p}
 \left(\frac{B_1}{K_1^q}\norm{\vect{x}_0-\vect{x}^{*(1)}}
       +\Delta_1\right)^{2/p}.
 \label{eq:K2_requirement_poly}
\end{equation}
Any positive integer iteration budget \(K_2\geq\overline K_2(K_1)\) guarantees
\[
 f^{(2)}(\vect{x}_{K_2}^{(2)})-f^{(2)}(\vect{x}^{*(2)})\leq\varepsilon.
\]
\end{theorem}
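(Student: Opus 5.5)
The argument chains the two rate bounds of \Cref{ass:rates} through the triangle inequality at the switching point, then inverts the function-value bound at level 2.

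\textbf{Step 1: bound the initial error at level 2.} Write \(\vect{x}_0^{(2)}:=\vect{x}_{K_1}^{(1)}\) for the level-2 initial point. Since \(K_1\geq1\) is an integer, the distance bound \eqref{eq:distance_rate} at level 1, with \(\vect{x}_0^{(1)}=\vect{x}_0\), gives
\[
 \norm{\vect{x}_{K_1}^{(1)}-\vect{x}^{*(1)}}\leq\frac{B_1}{K_1^q}\norm{\vect{x}_0-\vect{x}^{*(1)}}.
\]
By the triangle inequality and \Cref{asm2},
\[
 \norm{\vect{x}_0^{(2)}-\vect{x}^{*(2)}}
 \leq\norm{\vect{x}_{K_1}^{(1)}-\vect{x}^{*(1)}}+\norm{\vect{x}^{*(1)}-\vect{x}^{*(2)}}
 \leq\frac{B_1}{K_1^q}\norm{\vect{x}_0-\vect{x}^{*(1)}}+\Delta_1=:R.
\]

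\textbf{Step 2: apply the level-2 rate.} Resetting momentum and any other algorithmic state means the level-2 run is an ordinary run of the method from the arbitrary initial point \(\vect{x}_0^{(2)}\). This is the one place where the reset hypothesis is needed. \Cref{ass:rates} holds for every initial point, so \eqref{eq:function_rate} applies for any positive integer \(K_2\) and gives
\[
 f^{(2)}(\vect{x}_{K_2}^{(2)})-f^{(2)}(\vect{x}^{*(2)})\leq\frac{A_2}{K_2^p}\norm{\vect{x}_0^{(2)}-\vect{x}^{*(2)}}^2\leq\frac{A_2R^2}{K_2^p}.
\]

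\textbf{Step 3: invert the bound.} The inequality \(A_2R^2/K_2^p\leq\varepsilon\) is equivalent to \(K_2^p\geq A_2R^2/\varepsilon\). Because \(p>0\) and both sides are nonnegative, taking \(1/p\)-th powers preserves order, so the condition becomes \(K_2\geq(A_2/\varepsilon)^{1/p}R^{2/p}=\overline K_2(K_1)\). Any positive integer \(K_2\geq\overline K_2(K_1)\) therefore yields the target accuracy. The degenerate case \(R=0\) also works: then \(\overline K_2=0\), and every \(K_2\geq1\) gives a zero bound.

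No step here is a real obstacle. The only care needed is in Step 2: the rate must be invoked with \(k=K_2\geq1\), and its validity for arbitrary starting points, which is what the reset guarantees, must be made explicit.
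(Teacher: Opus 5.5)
Your proposal is correct and follows the paper's own proof. Both bound the level-2 initial error by the triangle inequality, using \eqref{eq:distance_rate} and \Cref{asm2}, and then apply \eqref{eq:function_rate} to the reset level-2 run. Your explicit inversion of \(A_2R^2/K_2^p\leq\varepsilon\) and your note on the case \(R=0\) only spell out what the paper leaves implicit.
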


\begin{proof}
The triangle inequality, \Cref{asm2}, and \eqref{eq:distance_rate} give
\begin{equation}
 \begin{aligned}
 \norm{\vect{x}_{K_1}^{(1)}-\vect{x}^{*(2)}}
 &\leq\norm{\vect{x}_{K_1}^{(1)}-\vect{x}^{*(1)}}
       +\norm{\vect{x}^{*(1)}-\vect{x}^{*(2)}}\\
 &\leq\frac{B_1}{K_1^q}\norm{\vect{x}_0-\vect{x}^{*(1)}}+\Delta_1.
 \end{aligned}
 \label{eq:warmstart_bound_poly}
\end{equation}
Applying \eqref{eq:function_rate} to the reinitialized level-2 method,
\[
 f^{(2)}(\vect{x}_{K_2}^{(2)})-f^{(2)}(\vect{x}^{*(2)})
 \leq\frac{A_2}{K_2^p}
       \left(\frac{B_1}{K_1^q}\norm{\vect{x}_0-\vect{x}^{*(1)}}
             +\Delta_1\right)^2
 \leq\varepsilon.
\]
\end{proof}

\subsection{Direct optimization and the break-even condition}
\begin{proposition}[Iterations for direct high-fidelity optimization]
\label{prop:direct_high_fidelity_cost}
Under \Cref{ass:rates}, define
\begin{equation}
 K_{\mathrm{dir}}:=\left(\frac{A_2}{\varepsilon}\right)^{1/p}
                  \norm{\vect{x}_0-\vect{x}^{*(2)}}^{2/p}.
 \label{eq:Kdir_general}
\end{equation}
Running the high-fidelity method directly from \(\vect{x}_0\) for any
integer \(k\geq\max\{1,\lceil K_{\mathrm{dir}}\rceil\}\) guarantees
\(f^{(2)}(\vect{x}_k^{(2)})-f^{(2)}(\vect{x}^{*(2)})\leq\varepsilon\).
For comparison before integer rounding, define the direct cost model by
\begin{equation}
 \mathrm{Cost}_{\mathrm{dir}}:=c_2K_{\mathrm{dir}}.
 \label{eq:direct_cost_general}
\end{equation}
The corresponding cost after integer rounding is
\(c_2\max\{1,\lceil K_{\mathrm{dir}}\rceil\}\).
\end{proposition}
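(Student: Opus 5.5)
The plan is to apply the function-value bound \eqref{eq:function_rate} of \Cref{ass:rates} at level \(m=2\), with initial point \(\vect{x}_0^{(2)}=\vect{x}_0\). This is the direct analogue of the second step in the proof of \Cref{thm:two_level_warmstart_poly}, with the warm-start distance replaced by the raw distance \(\norm{\vect{x}_0-\vect{x}^{*(2)}}\). Write \(D:=\norm{\vect{x}_0-\vect{x}^{*(2)}}\). For any integer \(k\geq1\), \eqref{eq:function_rate} gives
\[
 f^{(2)}(\vect{x}_k^{(2)})-f^{(2)}(\vect{x}^{*(2)})\leq\frac{A_2}{k^{p}}D^2 .
\]
It then suffices to show that \(A_2D^2/k^p\leq\varepsilon\) whenever \(k\geq\max\{1,\lceil K_{\mathrm{dir}}\rceil\}\).

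The key step is to invert the inequality. Since \(p>0\), the map \(t\mapsto t^p\) is strictly increasing on \([0,\infty)\). Because \(k\geq\lceil K_{\mathrm{dir}}\rceil\geq K_{\mathrm{dir}}\geq0\), we get \(k^p\geq K_{\mathrm{dir}}^p=(A_2/\varepsilon)D^2\) directly from \eqref{eq:Kdir_general}. This rearranges to \(A_2D^2/k^p\leq\varepsilon\), using \(k^p>0\) and \(\varepsilon>0\). The lower bound \(k\geq1\) is needed only because \Cref{ass:rates} is stated for integers \(k\geq1\).

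The only point needing care is the degenerate case \(D=0\). There \(K_{\mathrm{dir}}=0\) and \(\lceil K_{\mathrm{dir}}\rceil=0\), so the maximum with \(1\) is exactly what keeps \(k\) in the range where \eqref{eq:function_rate} applies. The bound then gives a gap of at most \(0\leq\varepsilon\), so nothing breaks.

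The remaining parts of the statement, \eqref{eq:direct_cost_general} and the rounded cost \(c_2\max\{1,\lceil K_{\mathrm{dir}}\rceil\}\), are definitions. They follow by multiplying the iteration count by the per-iteration cost \(c_2\), and need no proof beyond noting that they count the iterations just shown to be sufficient. I expect no real obstacle; the main subtlety is handling the \(k\geq1\) requirement and the \(D=0\) case correctly.
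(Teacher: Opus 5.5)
Your proposal is correct and follows essentially the same route as the paper: apply \eqref{eq:function_rate} at level 2 from \(\vect{x}_0\) and observe that the bound is at most \(\varepsilon\) once \(k\geq K_{\mathrm{dir}}\) and \(k\geq1\). You also spell out the monotonicity of \(t\mapsto t^p\) and the degenerate case \(\norm{\vect{x}_0-\vect{x}^{*(2)}}=0\), which the paper's two-line proof leaves implicit.
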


\begin{proof}
For direct optimization from \(\vect{x}_0\), \eqref{eq:function_rate} gives
\[
 f^{(2)}(\vect{x}_k^{(2)})-f^{(2)}(\vect{x}^{*(2)})
 \leq\frac{A_2}{k^p}\norm{\vect{x}_0-\vect{x}^{*(2)}}^2.
\]
The right-hand side is at most \(\varepsilon\) for every integer
\(k\geq K_{\mathrm{dir}}\) with \(k\geq1\).
\end{proof}

If \(\vect{x}_0=\vect{x}^{*(2)}\), the high-fidelity problem is already
solved. In the comparison below we exclude this case so that the direct
cost and the denominator in \(\rho(K_1)\) are positive.

\begin{proposition}[Cost model using the distance after switching]
\label{prop:warmstart_cost}
Assume \(\vect{x}_0\neq\vect{x}^{*(2)}\), and let \(K_1\geq1\) be an integer.
Define the ratio of the new and original distances to the high-fidelity minimizer
\begin{equation}
 \rho(K_1):=\frac{\norm{\vect{x}_{K_1}^{(1)}-\vect{x}^{*(2)}}}
                 {\norm{\vect{x}_0-\vect{x}^{*(2)}}}.
 \label{eq:rho_def}
\end{equation}
Under \Cref{ass:rates}, any positive integer high-fidelity budget satisfying
\begin{equation}
 K_2\geq K_{\mathrm{dir}}\rho(K_1)^{2/p}
 \label{eq:K2_rho_general}
\end{equation}
guarantees an objective gap of at most \(\varepsilon\) at level 2.
Before integer rounding, the corresponding cost model is
\begin{equation}
 c_1K_1+c_2K_{\mathrm{dir}}\rho(K_1)^{2/p}.
 \label{eq:warmstart_cost_general}
\end{equation}
The cost after integer rounding is
\[
 c_1K_1+c_2\max\left\{1,
   \left\lceil K_{\mathrm{dir}}\rho(K_1)^{2/p}\right\rceil\right\}.
\]
\end{proposition}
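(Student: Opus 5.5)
The plan is to apply the function-value bound \eqref{eq:function_rate} at level 2 from the warm start and then rewrite the required iteration count through \(\rho(K_1)\). The assumption \(\vect{x}_0\neq\vect{x}^{*(2)}\) makes the denominator in \eqref{eq:rho_def} positive, so \(\rho(K_1)\) is well defined and nonnegative. It also makes \(K_{\mathrm{dir}}>0\).

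First, the level-2 method is initialized at \(\vect{x}_0^{(2)}=\vect{x}_{K_1}^{(1)}\) with its algorithmic state reset, exactly as in \Cref{thm:two_level_warmstart_poly}. Then \eqref{eq:function_rate} applies for any positive integer \(K_2\):
\[
 f^{(2)}(\vect{x}_{K_2}^{(2)})-f^{(2)}(\vect{x}^{*(2)})
 \leq\frac{A_2}{K_2^{p}}\norm{\vect{x}_{K_1}^{(1)}-\vect{x}^{*(2)}}^2
 =\frac{A_2}{K_2^{p}}\,\rho(K_1)^2\norm{\vect{x}_0-\vect{x}^{*(2)}}^2 .
\]
This step uses only the definition of \(\rho\).

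Second, I show that the right-hand side is at most \(\varepsilon\) whenever \(K_2\geq K_{\mathrm{dir}}\rho(K_1)^{2/p}\). By \eqref{eq:Kdir_general}, \(K_{\mathrm{dir}}^{p}=(A_2/\varepsilon)\norm{\vect{x}_0-\vect{x}^{*(2)}}^2\). Since \(t\mapsto t^p\) is increasing on \([0,\infty)\), we get \(K_2^p\geq K_{\mathrm{dir}}^p\rho(K_1)^2\). Rearranging gives
\[
 \frac{A_2}{K_2^{p}}\rho(K_1)^2\norm{\vect{x}_0-\vect{x}^{*(2)}}^2\leq\varepsilon .
\]
The case \(\rho(K_1)=0\) needs separate mention. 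There the bound is zero and any positive integer \(K_2\) works, which is consistent with \eqref{eq:K2_rho_general}.

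Finally, the cost statements follow by accounting. The real-valued model charges \(c_1K_1\) for the low-fidelity work and \(c_2\) times the sufficient level-2 count \(K_{\mathrm{dir}}\rho(K_1)^{2/p}\), which is \eqref{eq:warmstart_cost_general}. For the rounded cost, I would note that the smallest positive integer satisfying \eqref{eq:K2_rho_general} is \(\max\{1,\lceil K_{\mathrm{dir}}\rho(K_1)^{2/p}\rceil\}\). The ``\(\max\{1,\cdot\}\)'' is there because \eqref{eq:function_rate} is only asserted for \(k\geq1\).

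No step is a real obstacle. The only points needing care are the positivity conventions: \(\rho\) may vanish, and \eqref{eq:function_rate} is restricted to \(k\geq1\). Both are handled by the \(\max\{1,\cdot\}\).
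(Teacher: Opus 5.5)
Your proof is correct and follows the paper's argument: apply \eqref{eq:function_rate} at level 2 from the reset warm start $\vect{x}_{K_1}^{(1)}$, rewrite the required iteration count as $(A_2/\varepsilon)^{1/p}\norm{\vect{x}_{K_1}^{(1)}-\vect{x}^{*(2)}}^{2/p}=K_{\mathrm{dir}}\rho(K_1)^{2/p}$ using the definition of $K_{\mathrm{dir}}$, and add $c_1K_1$. Your extra remarks on the case $\rho(K_1)=0$ and on $\max\{1,\cdot\}$ arising from the restriction $k\geq1$ only spell out details the paper's short proof leaves implicit.
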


\begin{proof}
The level-2 function-value bound is at most \(\varepsilon\) whenever
\[
 K_2\geq\left(\frac{A_2}{\varepsilon}\right)^{1/p}
         \norm{\vect{x}_{K_1}^{(1)}-\vect{x}^{*(2)}}^{2/p}
       =K_{\mathrm{dir}}\rho(K_1)^{2/p},\qquad K_2\geq1.
\]
Multiplying the required high-fidelity budget by \(c_2\) and adding
\(c_1K_1\) gives \eqref{eq:warmstart_cost_general}.
\end{proof}

\begin{theorem}[When the cost model favors a warm start]
\label{thm:warmstart_more_efficient}
Under \Cref{ass:smooth_sc,asm2,ass:rates}, assume
\(\vect{x}_0\neq\vect{x}^{*(2)}\). For a positive integer \(K_1\),
the cost model in \eqref{eq:warmstart_cost_general}
is smaller than \(\mathrm{Cost}_{\mathrm{dir}}\) if and only if
\begin{equation}
 \rho(K_1)^{2/p}+\frac{c_1}{c_2}\frac{K_1}{K_{\mathrm{dir}}}<1.
 \label{eq:WS_condition_general}
\end{equation}
A sufficient condition is
\begin{equation}
 \left(\frac{\dfrac{B_1}{K_1^q}\norm{\vect{x}_0-\vect{x}^{*(1)}}+\Delta_1}
             {\norm{\vect{x}_0-\vect{x}^{*(2)}}}\right)^{2/p}
 +\frac{c_1}{c_2}\frac{K_1}{K_{\mathrm{dir}}}<1.
 \label{eq:WS_explicit_general}
\end{equation}
\end{theorem}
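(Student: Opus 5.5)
The argument has two parts: an exact algebraic equivalence, followed by a monotone upper bound. For the equivalence, I start from \Cref{prop:warmstart_cost}, which gives the warm-start cost model \(c_1K_1+c_2K_{\mathrm{dir}}\rho(K_1)^{2/p}\), and from \Cref{prop:direct_high_fidelity_cost}, which gives \(\mathrm{Cost}_{\mathrm{dir}}=c_2K_{\mathrm{dir}}\). The assumption \(\vect{x}_0\neq\vect{x}^{*(2)}\) makes \(\norm{\vect{x}_0-\vect{x}^{*(2)}}>0\). Together with \(A_2,\varepsilon>0\), this gives \(K_{\mathrm{dir}}>0\), and \(c_2>0\) by assumption. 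Dividing the strict inequality
\[
 c_1K_1+c_2K_{\mathrm{dir}}\rho(K_1)^{2/p}<c_2K_{\mathrm{dir}}
\]
by the positive quantity \(c_2K_{\mathrm{dir}}\) preserves both the inequality and its reverse. This yields \eqref{eq:WS_condition_general} exactly, and hence the ``if and only if''.

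For the sufficient condition, I bound the numerator of \(\rho(K_1)\) with the estimate \eqref{eq:warmstart_bound_poly} from the proof of \Cref{thm:two_level_warmstart_poly}. That estimate combines the triangle inequality, \Cref{asm2}, and \eqref{eq:distance_rate}, which is valid because \(K_1\geq1\) is an integer:
\[
 \norm{\vect{x}_{K_1}^{(1)}-\vect{x}^{*(2)}}\leq\frac{B_1}{K_1^q}\norm{\vect{x}_0-\vect{x}^{*(1)}}+\Delta_1 .
\]
Dividing by \(\norm{\vect{x}_0-\vect{x}^{*(2)}}>0\) shows that \(\rho(K_1)\) is at most the bracketed quantity in \eqref{eq:WS_explicit_general}. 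Both quantities are nonnegative, and \(t\mapsto t^{2/p}\) is nondecreasing on \([0,\infty)\) because \(2/p>0\). So \(\rho(K_1)^{2/p}\) is at most the first term of \eqref{eq:WS_explicit_general}. Adding the common term \(\frac{c_1}{c_2}\frac{K_1}{K_{\mathrm{dir}}}\) to both sides shows that the left side of \eqref{eq:WS_condition_general} is at most the left side of \eqref{eq:WS_explicit_general}. If the latter is below \(1\), so is the former, and the first part then gives the conclusion.

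No step is a genuine obstacle; the only points needing care are the sign conditions. The positivity of \(K_{\mathrm{dir}}\) and of \(c_2\) is what makes the division an equivalence, and the nonnegativity of the base is what lets the monotone power \(2/p\) preserve the order. These are secured by the hypothesis \(\vect{x}_0\neq\vect{x}^{*(2)}\) and by the positivity of \(A_2\) and \(\varepsilon\) in \Cref{ass:rates} and the target \eqref{eq:target}.
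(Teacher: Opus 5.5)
Your proposal is correct and takes essentially the same route as the paper. Dividing the strict cost inequality by the positive quantity \(c_2K_{\mathrm{dir}}\) gives the equivalence, and bounding \(\rho(K_1)\) with \eqref{eq:warmstart_bound_poly} gives the sufficient condition. You also check two points the paper leaves implicit: that \(K_{\mathrm{dir}}>0\), and that \(t\mapsto t^{2/p}\) is monotone on \([0,\infty)\).
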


\begin{proof}
Divide the inequality
\(c_1K_1+c_2K_{\mathrm{dir}}\rho(K_1)^{2/p}<c_2K_{\mathrm{dir}}\)
by \(c_2K_{\mathrm{dir}}>0\) to obtain
\eqref{eq:WS_condition_general}. By \eqref{eq:warmstart_bound_poly},
\[
 \rho(K_1)\leq
 \frac{\dfrac{B_1}{K_1^q}\norm{\vect{x}_0-\vect{x}^{*(1)}}+\Delta_1}
      {\norm{\vect{x}_0-\vect{x}^{*(2)}}},
\]
which gives \eqref{eq:WS_explicit_general}.
\end{proof}

In \eqref{eq:WS_condition_general}, the first term is the predicted fraction of
high-fidelity cost after switching, relative to direct optimization. The second is the
low-fidelity cost relative to the direct cost. Their sum must be less
than one. Equation \eqref{eq:WS_explicit_general} replaces the post-switch
distance in the numerator by the upper bound in
\eqref{eq:warmstart_bound_poly}. The initial distance in the
denominator is unchanged.
Both comparisons are between cost models before integer rounding.

\subsection{Minimizing the cost model}
Equation \eqref{eq:warmstart_bound_poly} gives an explicit upper bound
on the cost model in \eqref{eq:warmstart_cost_general}. We minimize
this upper bound to choose the low-fidelity budget. For this
continuous model, we allow \(K_1>0\) and omit integer rounding at
the high-fidelity level. The subscript \(\mathrm{WS}\) in the following
theorem denotes the warm-start strategy.

\begin{theorem}[Unique minimizer of the two-level cost model]
\label{thm:optimal_allocation_general}
Under \Cref{ass:smooth_sc,asm2,ass:rates}, assume
\(\vect{x}_0\neq\vect{x}^{*(1)}\),
\(c_1,c_2>0\), and \(\varepsilon>0\). Define
\begin{equation}
 \mathrm{Cost}_{\mathrm{WS}}(K_1)
 :=c_1K_1+c_2\left(\frac{A_2}{\varepsilon}\right)^{1/p}
       \left(\frac{B_1}{K_1^q}\norm{\vect{x}_0-\vect{x}^{*(1)}}
             +\Delta_1\right)^{2/p},
 \label{eqn:warmstartcost}
\end{equation}
for \(K_1>0\). This cost model is strictly convex, and its minimizer
\(K_1^*\) is the unique positive solution of
\begin{equation}
 c_1=c_2\left(\frac{A_2}{\varepsilon}\right)^{1/p}
 \frac{2qB_1\norm{\vect{x}_0-\vect{x}^{*(1)}}}{p(K_1^*)^{q+1}}
 \left(\frac{B_1}{(K_1^*)^q}\norm{\vect{x}_0-\vect{x}^{*(1)}}
       +\Delta_1\right)^{2/p-1}.
 \label{eq:allocation_balance}
\end{equation}
The corresponding high-fidelity budget is \(K_2^*:=\overline K_2(K_1^*)\),
evaluated from \eqref{eq:K2_requirement_poly} at the real value \(K_1^*\).
\end{theorem}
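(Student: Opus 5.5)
The plan is to treat \(\mathrm{Cost}_{\mathrm{WS}}\) as a smooth function of one real variable on \((0,\infty)\) and work directly with its first two derivatives. I will abbreviate \(a:=B_1\norm{\vect{x}_0-\vect{x}^{*(1)}}\), \(C:=c_2(A_2/\varepsilon)^{1/p}\), \(s:=2/p\) and \(g(K):=aK^{-q}+\Delta_1\). The hypothesis \(\vect{x}_0\neq\vect{x}^{*(1)}\) gives \(a>0\), and \(\Delta_1\geq0\), so \(g>0\) on \((0,\infty)\). Then \(\mathrm{Cost}_{\mathrm{WS}}(K)=c_1K+C\,g(K)^s\), which is \(C^\infty\) on \((0,\infty)\).

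\textbf{Convexity.} A composition argument is not enough here. When \(p>2\) we have \(s<1\), so \(t\mapsto t^s\) is concave, and convexity of \(g\) alone says nothing. I will therefore compute the second derivative explicitly:
\[
 \frac{d^2}{dK^2}g^s=s\,g^{s-2}\bigl[(s-1)(g')^2+g\,g''\bigr].
\]
Substituting \(g'=-qaK^{-q-1}\) and \(g''=q(q+1)aK^{-q-2}\), the bracket should simplify to
\[
 qa\,K^{-2q-2}\bigl[a(1+qs)+(q+1)\Delta_1K^q\bigr].
\]
This is strictly positive because \(a,q,s>0\) and \(\Delta_1\geq0\). Since \(c_1K\) is linear, \(\mathrm{Cost}_{\mathrm{WS}}''>0\) on \((0,\infty)\), which gives strict convexity. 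This algebraic simplification is the step that needs care. The key point is that the possibly negative factor \((s-1)q^2a^2\) is absorbed by \(q(q+1)a^2\), leaving the coefficient \(q(1+qs)a^2\).

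\textbf{Existence and uniqueness of the minimizer.} The derivative is
\[
 \mathrm{Cost}_{\mathrm{WS}}'(K)=c_1-C\,s\,qa\,K^{-q-1}g(K)^{s-1}.
\]
By strict convexity this is strictly increasing, so at most one zero exists. To show a zero does exist, I will check the limits at the two ends of the interval.
\begin{itemize}
\item As \(K\to0^+\), we have \(g\sim aK^{-q}\), so the subtracted term behaves like \(C s q a^s K^{-qs-1}\to\infty\). The derivative therefore tends to \(-\infty\).
\item As \(K\to\infty\), the subtracted term tends to \(0\). If \(\Delta_1>0\) this is because \(g\to\Delta_1\). If \(\Delta_1=0\) the term equals \(Csqa^sK^{-qs-1}\), which also vanishes. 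The derivative therefore tends to \(c_1>0\).
\end{itemize}
By continuity and strict monotonicity, there is exactly one \(K_1^*>0\) with \(\mathrm{Cost}_{\mathrm{WS}}'(K_1^*)=0\).

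\textbf{Conclusion.} Setting the derivative to zero and writing out \(C\), \(s\) and \(a\) gives exactly \eqref{eq:allocation_balance}. A stationary point of a strictly convex differentiable function is its unique global minimizer, so \(K_1^*\) minimizes \(\mathrm{Cost}_{\mathrm{WS}}\), and any other positive solution of \eqref{eq:allocation_balance} would be a second zero of a strictly increasing derivative, which is impossible. Finally, \(K_2^*=\overline K_2(K_1^*)\) is simply the definition \eqref{eq:K2_requirement_poly} evaluated at \(K_1^*\). It is the second term of \(\mathrm{Cost}_{\mathrm{WS}}(K_1^*)\) divided by \(c_2\), and requires no further argument.
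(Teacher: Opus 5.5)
Your proof is correct and follows essentially the paper's route: the same substitution \(a=B_1\norm{\vect{x}_0-\vect{x}^{*(1)}}\), \(s=2/p\), the same explicit second derivative (your \(qaK^{-2q-2}[a(1+qs)+(q+1)\Delta_1K^q]\) equals the paper's \(qaK^{-q-2}[(q+1)\Delta_1+(sq+1)aK^{-q}]\)), and a boundary-behaviour argument for existence. The one minor difference is that you show \(\mathrm{Cost}_{\mathrm{WS}}'\) increases from \(-\infty\) to \(c_1>0\) and apply the intermediate value theorem, whereas the paper notes that the cost itself tends to \(+\infty\) at both ends; your version has the small benefit of stating explicitly why the balance equation has only one positive root.
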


\begin{proof}
\Cref{thm:two_level_warmstart_poly} gives the required high-fidelity budget in
\eqref{eq:K2_requirement_poly}, yielding the cost model
\eqref{eqn:warmstartcost}. If the initial low-fidelity error were zero,
this model would be affine and strictly increasing in \(K_1>0\), with
no minimizer. This is why the theorem requires
\(\vect{x}_0\neq\vect{x}^{*(1)}\).
To analyze the curvature under this condition, set
\[
 a=B_1\norm{\vect{x}_0-\vect{x}^{*(1)}}>0,\qquad
 s=\frac{2}{p}>0,\qquad
 g(K)=(aK^{-q}+\Delta_1)^s.
\]
Differentiation gives
\begin{align*}
 g'(K)&=-sq aK^{-q-1}(aK^{-q}+\Delta_1)^{s-1},\\
 g''(K)&=sq aK^{-q-2}(aK^{-q}+\Delta_1)^{s-2}
          \bigl[(q+1)\Delta_1+(sq+1)aK^{-q}\bigr]>0.
\end{align*}
Every factor outside the brackets is positive because
\(s,q,a,K>0\) and \(aK^{-q}+\Delta_1>0\). Inside the brackets,
\((q+1)\Delta_1\geq0\) and \((sq+1)aK^{-q}>0\), so the bracket is
also positive. Thus \(\mathrm{Cost}_{\mathrm{WS}}''(K)
=c_2(A_2/\varepsilon)^{1/p}g''(K)>0\).
Since \(a>0\), the cost tends to infinity as \(K\downarrow0\).
The term \(c_1K\) makes it tend to infinity as \(K\to\infty\).
A minimizer therefore exists and is unique. Setting the derivative to
zero gives \eqref{eq:allocation_balance}.
\end{proof}

For integer \(K_1\geq1\), the distance bound after switching gives
\[
 c_1K_1+c_2\left(\frac{A_2}{\varepsilon}\right)^{1/p}
 \norm{\vect{x}_{K_1}^{(1)}-\vect{x}^{*(2)}}^{2/p}
 \leq\mathrm{Cost}_{\mathrm{WS}}(K_1).
\]
Thus \(\mathrm{Cost}_{\mathrm{WS}}\) bounds the cost model in
\eqref{eq:warmstart_cost_general} using only the convergence rate and
the initial distance. At its optimum, \eqref{eq:allocation_balance} balances the
cost of one more low-fidelity iteration against the predicted saving
in high-fidelity work. This minimizer is optimal for the cost model,
not necessarily for the iterations observed in an experiment.
Moreover, several quantities in the model are usually unknown in
practice. They are the initial distance
\(\norm{\vect{x}_0-\vect{x}^{*(1)}}\), the bound \(\Delta_1\) on the
distance between minimizers, and the constants \(A_2\) and \(B_1\),
which for AGD depend on the smoothness and strong-convexity constants.
Before the model can be used, the distances and smoothness constants
must be replaced by computable upper bounds, and the strong-convexity
constants by positive lower bounds. For AGD,
\eqref{eq:agd_K1_expanded} shows that \(K_1^*\) increases with the
bound on \(\norm{\vect{x}_0-\vect{x}^{*(1)}}\) and with
\(L_1L_2/\mu_1\), so loose bounds produce a large low-fidelity budget
even when further low-fidelity iterations bring the iterates no closer
to the high-fidelity minimizer. \Cref{sec:certificates} bounds the
distances using gradients, \Cref{subsec:higgs_model_properties} gives
the curvature constants used in the experiments, and
\Cref{subsec:distance_diagnostics} shows this effect.

\subsection{Closed-form allocation for AGD}
\begin{corollary}[Two-level AGD budgets minimizing the cost model]
\label{thm:optimal_allocation_no_shorthand}
Under the assumptions of \Cref{thm:optimal_allocation_general}, the AGD
rates in \Cref{ex:agd_rates} give the unique continuous minimizer
\begin{equation}
 K_1^*=
 \sqrt{\frac{c_2}{c_1}
       \sqrt{\frac{8L_1L_2}{\mu_1\varepsilon}}\,
       \norm{\vect{x}_0-\vect{x}^{*(1)}}}.
 \label{eq:agd_K1_expanded}
\end{equation}
Its high-fidelity budget and minimum model cost are
\begin{align}
 K_2^*&=
 \sqrt{\frac{2L_2}{\varepsilon}}\,\Delta_1
 +\sqrt{\frac{c_1}{c_2}
        \sqrt{\frac{8L_1L_2}{\mu_1\varepsilon}}\,
        \norm{\vect{x}_0-\vect{x}^{*(1)}}},
 \label{eq:K2_star_no_shorthand}\\
 \mathrm{Cost}_{\mathrm{WS}}^*&=
 2\sqrt{c_1c_2\sqrt{\frac{8L_1L_2}{\mu_1\varepsilon}}\,
        \norm{\vect{x}_0-\vect{x}^{*(1)}}}
 +c_2\sqrt{\frac{2L_2}{\varepsilon}}\,\Delta_1.
 \label{eq:Cost_star_no_shorthand}
\end{align}
This minimum is smaller than the direct cost model
\(c_2\sqrt{2L_2/\varepsilon}\norm{\vect{x}_0-\vect{x}^{*(2)}}\)
if and only if
\begin{equation}
 \Delta_1+2\sqrt{\frac{c_1}{c_2}
                 \sqrt{\frac{2L_1\varepsilon}{\mu_1L_2}}\,
                 \norm{\vect{x}_0-\vect{x}^{*(1)}}}
 <\norm{\vect{x}_0-\vect{x}^{*(2)}}.
 \label{eq:agd_break_even}
\end{equation}
\end{corollary}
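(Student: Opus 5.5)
Substituting the AGD constants of \Cref{ex:agd_rates} into \Cref{thm:optimal_allocation_general} should reduce everything to an explicit computation. With \(p=2\), \(q=1\), \(A_2=2L_2\) and \(B_1=2\sqrt{L_1/\mu_1}\), the exponent \(2/p\) equals one. The cost model \eqref{eqn:warmstartcost} then becomes
\[
 \mathrm{Cost}_{\mathrm{WS}}(K_1)=c_1K_1+c_2\sqrt{\frac{2L_2}{\varepsilon}}\left(\frac{a}{K_1}+\Delta_1\right),
 \qquad a:=2\sqrt{\frac{L_1}{\mu_1}}\norm{\vect{x}_0-\vect{x}^{*(1)}}>0,
\]
which has the form \(c_1K_1+C/K_1+D\) with \(C=c_2a\sqrt{2L_2/\varepsilon}\) and \(D=c_2\sqrt{2L_2/\varepsilon}\,\Delta_1\). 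Existence and uniqueness of the minimizer are already given by \Cref{thm:optimal_allocation_general}. The balance equation \eqref{eq:allocation_balance} reduces (the factor raised to \(2/p-1=0\) disappears) to \(c_1=C/(K_1^*)^2\), so \(K_1^*=\sqrt{C/c_1}\).

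The main algebraic step is to simplify \(C\) to match \eqref{eq:agd_K1_expanded}. We have
\[
 2\sqrt{L_1/\mu_1}\sqrt{2L_2/\varepsilon}=\sqrt{8L_1L_2/(\mu_1\varepsilon)},
\]
which gives \(C=c_2\sqrt{8L_1L_2/(\mu_1\varepsilon)}\,\norm{\vect{x}_0-\vect{x}^{*(1)}}\) and hence the stated \(K_1^*\). Next, \(K_2^*=\overline K_2(K_1^*)=\sqrt{2L_2/\varepsilon}\,(a/K_1^*+\Delta_1)\). Since \(\sqrt{2L_2/\varepsilon}\,a=C/c_2\), the first term equals \(C/(c_2K_1^*)=\sqrt{c_1C}/c_2=\sqrt{(c_1/c_2)\,C/c_2}\), which is the second term in \eqref{eq:K2_star_no_shorthand}. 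The minimum cost is \(c_1K_1^*+C/K_1^*+D=2\sqrt{c_1C}+D\), which is \eqref{eq:Cost_star_no_shorthand}.

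For the break-even condition, the direct cost from \Cref{prop:direct_high_fidelity_cost} with \(p=2\) is \(c_2\sqrt{2L_2/\varepsilon}\,\norm{\vect{x}_0-\vect{x}^{*(2)}}\). Dividing the inequality \(\mathrm{Cost}_{\mathrm{WS}}^*<\mathrm{Cost}_{\mathrm{dir}}\) by the positive quantity \(c_2\sqrt{2L_2/\varepsilon}\) gives an equivalent condition. This division leaves \(\Delta_1\) unchanged and turns \(2\sqrt{c_1C}\) into
\[
 2\sqrt{\frac{c_1}{c_2}\sqrt{\frac{8L_1L_2}{\mu_1\varepsilon}}\,\norm{\vect{x}_0-\vect{x}^{*(1)}}\cdot\frac{\varepsilon}{2L_2}},
\]
because dividing by \(\sqrt{2L_2/\varepsilon}\) outside the square root is the same as multiplying by \(\varepsilon/(2L_2)\) inside it. Finally,
\[
 \sqrt{\frac{8L_1L_2}{\mu_1\varepsilon}}\cdot\frac{\varepsilon}{2L_2}=\sqrt{\frac{8L_1L_2\varepsilon^2}{4\mu_1\varepsilon L_2^2}}=\sqrt{\frac{2L_1\varepsilon}{\mu_1L_2}},
\]
which yields \eqref{eq:agd_break_even}. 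The only real obstacle is careful bookkeeping of these nested radicals. Everything else follows directly from \Cref{thm:optimal_allocation_general} and the positivity of \(c_2\sqrt{2L_2/\varepsilon}\), which makes the division an equivalence.
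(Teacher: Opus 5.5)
Your proposal is correct and follows essentially the same route as the paper. Both substitute the AGD constants to reduce the model to \(c_1K_1+C/K_1+D\), locate the minimizer from the stationarity condition, and substitute back to obtain \(K_2^*\) and the minimum cost. You obtain the stationarity condition from the balance equation \eqref{eq:allocation_balance}, whereas the paper differentiates directly. Both then divide the break-even inequality by \(c_2\sqrt{2L_2/\varepsilon}>0\), and your radical bookkeeping checks out.
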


\begin{proof}
Substituting \(p=2\), \(q=1\), and the constants from
\Cref{ex:agd_rates} into \eqref{eqn:warmstartcost} gives
\[
 \mathrm{Cost}_{\mathrm{WS}}(K_1)
 =c_1K_1+\frac{c_2}{K_1}
          \sqrt{\frac{8L_1L_2}{\mu_1\varepsilon}}\,
          \norm{\vect{x}_0-\vect{x}^{*(1)}}
       +c_2\sqrt{\frac{2L_2}{\varepsilon}}\,\Delta_1.
\]
By \Cref{thm:optimal_allocation_general}, this function is strictly
convex, so its minimizer is the unique point where its derivative
vanishes. This condition is
\[
 0=c_1-\frac{c_2}{(K_1^*)^2}
      \sqrt{\frac{8L_1L_2}{\mu_1\varepsilon}}\,
      \norm{\vect{x}_0-\vect{x}^{*(1)}},
\]
which gives \eqref{eq:agd_K1_expanded}. At this point, the first two cost terms
are equal. Substitution gives \eqref{eq:K2_star_no_shorthand} and
\eqref{eq:Cost_star_no_shorthand}. Comparing this minimum with the
direct cost and dividing both sides by \(c_2\sqrt{2L_2/\varepsilon}\)
gives \eqref{eq:agd_break_even}.
\end{proof}

For AGD, the bound \(\Delta_1\) on the distance between minimizers
adds a constant to the high-fidelity cost. It affects whether the
model favors a warm start and how many high-fidelity iterations are
required, but leaves \(K_1^*\) unchanged. The break-even criterion
\eqref{eq:agd_break_even} requires
\(\Delta_1<\norm{\vect{x}_0-\vect{x}^{*(2)}}\).
A failed model comparison does not rule out a useful warm
start. With all other quantities fixed, \(K_1^*\) is proportional to
\(\varepsilon^{-1/4}\). Thus, asking for a smaller objective gap makes
the model assign more low-fidelity iterations. \Cref{sec:experiments} examines
how these model predictions compare with observed costs.

A simple rounding rule is to evaluate
\[
 c_1K_1+c_2\max\{1,\lceil\overline K_2(K_1)\rceil\}
\]
at the two budgets
\[
 K_1=\max\{1,\lfloor K_1^*\rfloor\}
 \quad\text{and}\quad
 K_1=\max\{1,\lceil K_1^*\rceil\}.
\]
Compare the smaller of these two costs with the direct cost
\[
 c_2\max\{1,\lceil K_{\mathrm{dir}}\rceil\}.
\] This rule does not necessarily minimize the cost model over all integer budgets.

The next result quantifies the effect of distance-estimation error on
the continuous AGD cost model. We write
\(\mathrm{Cost}_{\mathrm{WS}}(K_1;D_1)\) to make its dependence on the
initial low-fidelity distance explicit.

\begin{proposition}[Robustness to distance-estimation error]
\label{prop:distance_robustness}
Suppose the assumptions of Corollary~\ref{thm:optimal_allocation_no_shorthand} hold. Let
\[
 D_1:=\norm{\vect{x}_0-\vect{x}^{*(1)}},
\]
and suppose that the allocation is computed using an estimate
\(\widehat D_1>0\), while the true distance is
\[
 D_1=\alpha\widehat D_1,\qquad \alpha>0.
\]
Define
\[
 \beta:=c_2\sqrt{\frac{8L_1L_2}{\mu_1\varepsilon}},
 \qquad
 \gamma:=c_2\sqrt{\frac{2L_2}{\varepsilon}}\Delta_1.
\]
The allocation obtained from the estimated distance is
\[
 \widehat K_1=\sqrt{\frac{\beta\widehat D_1}{c_1}},
\]
whereas the allocation minimizing the cost model under the true distance is
\[
 K_1^*(\alpha)
 =\sqrt{\frac{\beta\alpha\widehat D_1}{c_1}}
 =\sqrt{\alpha}\,\widehat K_1.
\]
Moreover, letting
\[
 A:=\sqrt{c_1\beta\widehat D_1},
\]
the relative model cost increase caused by using \(\widehat K_1\)
instead of \(K_1^*(\alpha)\) is
\[
 \frac{\mathrm{Cost}_{\mathrm{WS}}(\widehat K_1;\alpha\widehat D_1)}
 {\mathrm{Cost}_{\mathrm{WS}}(K_1^*(\alpha);\alpha\widehat D_1)}-1
 =\frac{(\sqrt{\alpha}-1)^2A}{2\sqrt{\alpha}A+\gamma}.
\]
Consequently,
\[
 \begin{aligned}
 \frac{\mathrm{Cost}_{\mathrm{WS}}(\widehat K_1;\alpha\widehat D_1)}
 {\mathrm{Cost}_{\mathrm{WS}}(K_1^*(\alpha);\alpha\widehat D_1)}-1
 &\leq\frac{(\sqrt{\alpha}-1)^2}{2\sqrt{\alpha}}\\
 &=\frac12\left(\sqrt{\alpha}+\frac{1}{\sqrt{\alpha}}\right)-1.
 \end{aligned}
\]
Equality in the upper bound holds when \(\Delta_1=0\).
\end{proposition}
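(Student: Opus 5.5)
The plan is to rewrite the AGD cost model from the proof of Corollary~\ref{thm:optimal_allocation_no_shorthand} in terms of \(\beta\), \(\gamma\) and the distance argument:
\[
 \mathrm{Cost}_{\mathrm{WS}}(K_1;D)=c_1K_1+\frac{\beta D}{K_1}+\gamma .
\]
This follows immediately by substituting \(D\) for \(\norm{\vect{x}_0-\vect{x}^{*(1)}}\).

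The two allocations come straight from \eqref{eq:agd_K1_expanded}. That formula is \(K_1^*=\sqrt{\beta D/c_1}\) in this notation. Evaluating it at \(D=\widehat D_1\) gives \(\widehat K_1\), and evaluating it at \(D=\alpha\widehat D_1\) gives \(K_1^*(\alpha)=\sqrt{\alpha}\,\widehat K_1\). Uniqueness is inherited from Theorem~\ref{thm:optimal_allocation_general}, since \(\alpha\widehat D_1>0\) means the true initial distance is nonzero.

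Next I compute both costs under the true distance, using \(c_1\widehat K_1=\beta\widehat D_1/\widehat K_1=A\):
\[
 \mathrm{Cost}_{\mathrm{WS}}(\widehat K_1;\alpha\widehat D_1)
 =c_1\widehat K_1+\frac{\alpha\beta\widehat D_1}{\widehat K_1}+\gamma=(1+\alpha)A+\gamma .
\]
At the optimum the two variable terms are equal, each being \(\sqrt{c_1\beta\alpha\widehat D_1}=\sqrt{\alpha}A\). Hence
\[
 \mathrm{Cost}_{\mathrm{WS}}(K_1^*(\alpha);\alpha\widehat D_1)=2\sqrt{\alpha}A+\gamma .
\]
Subtracting gives the numerator \((1+\alpha-2\sqrt{\alpha})A=(\sqrt{\alpha}-1)^2A\), which is the claimed identity for the relative increase.

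For the bound, \(\gamma\geq0\) because \(\Delta_1\geq0\), and \(A>0\). So the denominator is at least \(2\sqrt{\alpha}A\), which gives the upper bound \((\sqrt\alpha-1)^2/(2\sqrt\alpha)\). Expanding \((\alpha-2\sqrt\alpha+1)/(2\sqrt\alpha)\) produces the form \(\tfrac12(\sqrt\alpha+1/\sqrt\alpha)-1\). When \(\Delta_1=0\) we have \(\gamma=0\), and the inequality becomes an equality.

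There is no real obstacle in this argument. The only step needing care is the algebraic simplification using \(c_1\widehat K_1=\beta\widehat D_1/\widehat K_1=A\), which keeps the computation clean.
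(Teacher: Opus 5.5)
Your proposal is correct and follows the paper's proof essentially step for step: you rewrite the AGD cost model as $c_1K_1+\beta D/K_1+\gamma$, use $c_1\widehat K_1=\beta\widehat D_1/\widehat K_1=A$ to get the costs $(1+\alpha)A+\gamma$ and $2\sqrt{\alpha}A+\gamma$, and then drop $\gamma\geq0$ from the denominator to get the bound. Your remark that uniqueness of $K_1^*(\alpha)$ follows from Theorem~\ref{thm:optimal_allocation_general} because $\alpha\widehat D_1>0$ is a harmless addition that the paper leaves implicit.
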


\begin{proof}
From Corollary~\ref{thm:optimal_allocation_no_shorthand}, the continuous cost model
can be written as
\[
 \mathrm{Cost}_{\mathrm{WS}}(K_1;D_1)
 =c_1K_1+\frac{\beta D_1}{K_1}+\gamma.
\]
Using \(\widehat D_1\) gives
\[
 \widehat K_1=\sqrt{\frac{\beta\widehat D_1}{c_1}}.
\]
If the true distance is \(D_1=\alpha\widehat D_1\), minimizing the model
cost under this distance instead gives
\[
 K_1^*(\alpha)
 =\sqrt{\frac{\beta\alpha\widehat D_1}{c_1}}
 =\sqrt{\alpha}\,\widehat K_1.
\]
Since
\[
 c_1\widehat K_1
 =\frac{\beta\widehat D_1}{\widehat K_1}=A,
\]
the model cost of the estimated allocation under the true distance is
\[
 \mathrm{Cost}_{\mathrm{WS}}(\widehat K_1;\alpha\widehat D_1)
 =(1+\alpha)A+\gamma.
\]
At the model optimum under the true distance,
\[
 \mathrm{Cost}_{\mathrm{WS}}(K_1^*(\alpha);\alpha\widehat D_1)
 =2\sqrt{\alpha}A+\gamma.
\]
Therefore,
\[
 \begin{aligned}
 \frac{\mathrm{Cost}_{\mathrm{WS}}(\widehat K_1;\alpha\widehat D_1)}
 {\mathrm{Cost}_{\mathrm{WS}}(K_1^*(\alpha);\alpha\widehat D_1)}-1
 &=\frac{(1+\alpha)A+\gamma}{2\sqrt{\alpha}A+\gamma}-1\\
 &=\frac{(1+\alpha-2\sqrt{\alpha})A}{2\sqrt{\alpha}A+\gamma}\\
 &=\frac{(\sqrt{\alpha}-1)^2A}{2\sqrt{\alpha}A+\gamma}.
 \end{aligned}
\]
Since \(\gamma\geq0\),
\[
 \frac{(\sqrt{\alpha}-1)^2A}{2\sqrt{\alpha}A+\gamma}
 \leq\frac{(\sqrt{\alpha}-1)^2}{2\sqrt{\alpha}},
\]
which proves the result.
\end{proof}

The proposition shows that the continuous AGD allocation is robust to
small estimation error in the initial low-fidelity distance.
In particular, for \(\alpha=1+\delta\) with \(\delta\) close to zero,
\[
 \frac12\left(\sqrt{\alpha}+\frac{1}{\sqrt{\alpha}}\right)-1
 =\frac{\delta^2}{8}+O(\delta^3).
\]
Thus, although the allocation itself changes at first order with the
distance estimate, the resulting relative model cost loss is second
order in the estimation error. For example, if the true distance is
50\% larger than its estimate, so that \(\alpha=1.5\), the relative
model cost increase is at most approximately 2.06\%. Even if the true
distance is twice the estimated value, corresponding to \(\alpha=2\),
the model cost increase is at most approximately 6.07\%. The distance
bound \(\Delta_1\) further reduces these relative losses whenever
\(\Delta_1>0\) and \(\alpha\neq1\).

\subsection{Gradient bounds for unknown distances and accuracy}
\label{sec:certificates}

The allocation formulas involve unknown minimizer distances. Strong
convexity lets us bound these distances using gradients. We call such
computable upper bounds \emph{gradient bounds}. The same argument
also bounds the objective gap, giving a stopping test that does not
require the minimum objective value.

\begin{proposition}[Gradient bounds on distances and objective gaps]
\label{prop:certificates}
Under \Cref{ass:smooth_sc}, for every level \(m\) and every \(\vect{z}\in\R^d\),
\begin{align}
 \norm{\vect{z}-\vect{x}^{*(m)}}
 &\leq\frac{\norm{\nabla f^{(m)}(\vect{z})}}{\mu_m},
 \label{eq:distance_certificate}\\
 f^{(m)}(\vect{z})-f^{(m)}\bigl(\vect{x}^{*(m)}\bigr)
 &\leq\frac{\norm{\nabla f^{(m)}(\vect{z})}^2}{2\mu_m}.
 \label{eq:gap_certificate}
\end{align}
For adjacent levels, any common point \(\vect{z}\) also gives
\begin{equation}
 \norm{\vect{x}^{*(m)}-\vect{x}^{*(m+1)}}
 \leq
 \frac{\norm{\nabla f^{(m)}(\vect{z})}}{\mu_m}
 +\frac{\norm{\nabla f^{(m+1)}(\vect{z})}}{\mu_{m+1}}.
 \label{eq:mismatch_certificate}
\end{equation}
\end{proposition}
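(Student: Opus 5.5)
The plan is to derive all three inequalities from the strong convexity inequality \eqref{eq:strong_convexity} in \Cref{ass:smooth_sc}, combined with the first-order optimality condition \(\nabla f^{(m)}(\vect{x}^{*(m)})=\vect{0}\) at the unique minimizer.

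For the distance bound \eqref{eq:distance_certificate}, I will write \eqref{eq:strong_convexity} twice: once with \((\vect{x},\vect{y})=(\vect{z},\vect{x}^{*(m)})\), and once with the roles swapped. Adding the two inequalities cancels the function values and gives the strong monotonicity estimate
\[
 \ip{\nabla f^{(m)}(\vect{z})-\nabla f^{(m)}(\vect{x}^{*(m)})}{\vect{z}-\vect{x}^{*(m)}}\geq\mu_m\norm{\vect{z}-\vect{x}^{*(m)}}^2 .
\]
Since the gradient at the minimizer vanishes, the left side is \(\ip{\nabla f^{(m)}(\vect{z})}{\vect{z}-\vect{x}^{*(m)}}\). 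Cauchy--Schwarz bounds this by \(\norm{\nabla f^{(m)}(\vect{z})}\,\norm{\vect{z}-\vect{x}^{*(m)}}\). If \(\vect{z}=\vect{x}^{*(m)}\) the claim is trivial; otherwise I divide by \(\norm{\vect{z}-\vect{x}^{*(m)}}>0\) to get the bound.

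For the gap bound \eqref{eq:gap_certificate}, I will fix \(\vect{x}=\vect{z}\) in \eqref{eq:strong_convexity} and minimize both sides over \(\vect{y}\in\R^d\). The left side has minimum \(f^{(m)}(\vect{x}^{*(m)})\). The right side is a convex quadratic in \(\vect{y}\), minimized at \(\vect{y}=\vect{z}-\nabla f^{(m)}(\vect{z})/\mu_m\), where it equals \(f^{(m)}(\vect{z})-\norm{\nabla f^{(m)}(\vect{z})}^2/(2\mu_m)\). Rearranging gives the Polyak--Łojasiewicz-type bound. This is the one step needing a small computation, completing the square, but it is standard, so I do not expect a real obstacle.

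Finally, \eqref{eq:mismatch_certificate} follows from the triangle inequality
\[
 \norm{\vect{x}^{*(m)}-\vect{x}^{*(m+1)}}\leq\norm{\vect{x}^{*(m)}-\vect{z}}+\norm{\vect{z}-\vect{x}^{*(m+1)}} .
\]
I then apply \eqref{eq:distance_certificate} at level \(m\) and at level \(m+1\) to the same point \(\vect{z}\). This shows the right side is a valid choice of \(\Delta_m\) in \Cref{asm2}. Smoothness \eqref{eq:smoothness} is not needed anywhere.
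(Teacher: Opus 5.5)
Your proposal is correct and follows essentially the same route as the paper: strong monotonicity of the gradient with Cauchy--Schwarz for \eqref{eq:distance_certificate}, minimizing the right-hand side of \eqref{eq:strong_convexity} over \(\vect{y}\) for \eqref{eq:gap_certificate}, and the triangle inequality with the distance bound at both levels for \eqref{eq:mismatch_certificate}. You also supply two details the paper leaves implicit: you derive strong monotonicity by adding the two instances of \eqref{eq:strong_convexity}, and you handle the case \(\vect{z}=\vect{x}^{*(m)}\) before dividing.
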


\begin{proof}
By strong convexity, the gradient satisfies
\[
 \ip{\nabla f^{(m)}(\vect{z})-\nabla f^{(m)}(\vect{x}^{*(m)})}
    {\vect{z}-\vect{x}^{*(m)}}
 \geq\mu_m\norm{\vect{z}-\vect{x}^{*(m)}}^2.
\]
Since \(\nabla f^{(m)}(\vect{x}^{*(m)})=0\), Cauchy--Schwarz gives
\eqref{eq:distance_certificate}. Minimizing the right-hand side of
\eqref{eq:strong_convexity} over \(\vect{y}\) gives
\eqref{eq:gap_certificate}. Finally,
\[
 \norm{\vect{x}^{*(m)}-\vect{x}^{*(m+1)}}
 \leq\norm{\vect{x}^{*(m)}-\vect{z}}+\norm{\vect{z}-\vect{x}^{*(m+1)}}.
\]
Applying \eqref{eq:distance_certificate} at level \(m\) bounds the
first term by \(\|\nabla f^{(m)}(\vect{z})\|/\mu_m\). Applying it at
level \(m+1\) bounds the second term by
\(\|\nabla f^{(m+1)}(\vect{z})\|/\mu_{m+1}\).
Adding these bounds gives \eqref{eq:mismatch_certificate}.
\end{proof}

Equation \eqref{eq:distance_certificate} permits the replacement
\[
 \norm{\vect{x}_0-\vect{x}^{*(1)}}
 \leq\frac{\norm{\nabla f^{(1)}(\vect{x}_0)}}{\mu_1}
\]
in \eqref{eq:agd_K1_expanded}. Equation
\eqref{eq:mismatch_certificate} provides a valid choice of
\(\Delta_m\) by summing the two gradient bounds at a common point.
Equation \eqref{eq:gap_certificate} gives the high-fidelity stopping rule
\[
 \frac{\norm{\nabla f^{(M)}(\vect{x})}^2}{2\mu_M}\leq\varepsilon
 \quad\Longrightarrow\quad
 f^{(M)}(\vect{x})-f^{(M)}\bigl(\vect{x}^{*(M)}\bigr)\leq\varepsilon.
\]

This gradient test uses an upper bound on the objective gap. The
actual gap may already be at most \(\varepsilon\) while the upper bound
is still larger. Consequently, a budget that guarantees the required
objective gap need not make this stopping test pass.

Replacing distances by upper bounds preserves the accuracy guarantee
when both the allocation and the required high-fidelity budget are recomputed.
As noted in \Cref{sec:two_level}, loose bounds can then lead to
excessive work, and the same holds for a direct-cost model built from
an upper bound on
\(\norm{\vect{x}_0-\vect{x}^{*(2)}}\).

\section{Multi-Level Extension}
\label{sec:multi_level}

The two-level distance bound extends to a sequence of
\(M\) objectives. The first stage starts at \(\vect{x}_0^{(1)}=\vect{x}_0\).
After level \(m<M\), we initialize the next level using
\[
 \vect{x}_0^{(m+1)}=\vect{x}_{K_m}^{(m)}.
\]
At each switch, momentum and other algorithmic state are reset.

\begin{theorem}[Distance bounds across multiple levels]
\label{thm:multilevel_transfer}
Under \Cref{ass:smooth_sc,asm2,ass:rates}, let the budgets at the first \(M-1\) levels
\(K_1,\ldots,K_{M-1}\) be positive integers and define
\begin{align}
 R_1&:=\norm{\vect{x}_0-\vect{x}^{*(1)}},\nonumber\\
 R_{m+1}
 &:=\frac{B_m}{K_m^q}R_m+\Delta_m,
 \label{eq:generic_transfer_recursion}
\end{align}
for \(m=1,\ldots,M-1\). Thus, each bound depends on the budgets
at the preceding levels.
Then \(\norm{\vect{x}_0^{(m)}-\vect{x}^{*(m)}}\leq R_m\) for every level. Define
the required budget at the final level
\begin{equation}
 \overline K_M(K_1,\ldots,K_{M-1})
 :=\left(\frac{A_M}{\varepsilon}\right)^{1/p}R_M^{2/p}.
 \label{eq:multilevel_terminal_budget}
\end{equation}
Any positive integer budget at the final level of at least
\(\overline K_M(K_1,\ldots,K_{M-1})\) satisfies
\eqref{eq:target}.
\end{theorem}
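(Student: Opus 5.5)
The plan is to prove the distance bound \(\norm{\vect{x}_0^{(m)}-\vect{x}^{*(m)}}\leq R_m\) by induction on \(m\), and then apply the function-value rate \eqref{eq:function_rate} at the final level, exactly as in the proof of \Cref{thm:two_level_warmstart_poly}.

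\textbf{Base case.} For \(m=1\) there is nothing to show, since \(\vect{x}_0^{(1)}=\vect{x}_0\) and \(R_1\) is defined as \(\norm{\vect{x}_0-\vect{x}^{*(1)}}\).

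\textbf{Inductive step.} Assume the bound holds at some level \(m\leq M-1\). Because momentum and all other algorithmic state are reset at each switch, the iterates at level \(m\) are those of the chosen method started afresh from \(\vect{x}_0^{(m)}\). Hence \eqref{eq:distance_rate} applies with the integer \(K_m\geq1\), and I will reproduce the chain in \eqref{eq:warmstart_bound_poly} with the index shifted:
\[
 \norm{\vect{x}_0^{(m+1)}-\vect{x}^{*(m+1)}}
 \leq\norm{\vect{x}_{K_m}^{(m)}-\vect{x}^{*(m)}}+\norm{\vect{x}^{*(m)}-\vect{x}^{*(m+1)}}
 \leq\frac{B_m}{K_m^q}\norm{\vect{x}_0^{(m)}-\vect{x}^{*(m)}}+\Delta_m.
\]
Here the first inequality is the triangle inequality, and the second uses \eqref{eq:distance_rate} together with \Cref{asm2}. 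Since \(B_m/K_m^q>0\), the right-hand side is monotone in the initial distance. The induction hypothesis then bounds it by \(\frac{B_m}{K_m^q}R_m+\Delta_m=R_{m+1}\). This monotonicity step is the only point needing care, and it is immediate.

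\textbf{Final level.} Let \(K_M\) be a positive integer with \(K_M\geq\overline K_M\). Applying \eqref{eq:function_rate} at level \(M\), starting from \(\vect{x}_0^{(M)}\), gives
\[
 f^{(M)}(\vect{x}_{K_M}^{(M)})-f^{(M)}(\vect{x}^{*(M)})\leq\frac{A_M}{K_M^p}R_M^2.
\]
Since \(K_M^p\geq\overline K_M^{\,p}=(A_M/\varepsilon)R_M^2\), the right-hand side is at most \(\varepsilon\), which is \eqref{eq:target}. If \(R_M=0\), then \(\overline K_M=0\), and any \(K_M\geq1\) yields a gap bound of zero, so this case is also covered.

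I expect no real obstacle. The one subtlety is that \Cref{ass:rates} must hold for \emph{every} initial point, so that it applies to the warm-started point at each level. This is exactly what the assumption provides, given the reset of algorithmic state at each switch.
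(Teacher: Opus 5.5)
Your proposal is correct and matches the paper's proof step for step. Both argue by induction on the level, using the initialization rule, the triangle inequality, \eqref{eq:distance_rate}, and \Cref{asm2}, and then apply \eqref{eq:function_rate} at level $M$; your explicit handling of $R_M=0$ and of the momentum reset only spells out points the paper leaves implicit.
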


\begin{proof}
The distance bound holds with equality at level 1. Suppose it holds at
level \(m<M\). The initialization rule, triangle inequality, and
\eqref{eq:distance_rate} imply
\[
 \begin{aligned}
 \norm{\vect{x}_0^{(m+1)}-\vect{x}^{*(m+1)}}
 &\leq\norm{\vect{x}_{K_m}^{(m)}-\vect{x}^{*(m)}}
       +\norm{\vect{x}^{*(m)}-\vect{x}^{*(m+1)}}\\
 &\leq\frac{B_m}{K_m^q}R_m+\Delta_m=R_{m+1}.
 \end{aligned}
\]
Induction establishes the distance bound at every level. At the final
level, \eqref{eq:function_rate} gives
\[
 f^{(M)}(\vect{x}_{K_M}^{(M)})-f^{(M)}(\vect{x}^{*(M)})
 \leq\frac{A_M}{K_M^p}R_M^2\leq\varepsilon
\]
for every positive integer final-level budget at least as large as the stated value.
\end{proof}

The cost model with integer rounding at the final level is
\begin{equation}
 \sum_{m=1}^{M-1}c_mK_m
 +c_M\max\left\{1,
   \left\lceil\overline K_M(K_1,\ldots,K_{M-1})\right\rceil\right\}.
 \label{eq:multilevel_integer_cost}
\end{equation}
This expression can be minimized over positive integer budgets
\(K_1,\ldots,K_{M-1}\). The recursion shows how the error at one level
affects the initial points at later levels. For AGD, allowing positive
real budgets and omitting integer rounding gives the following convex
reformulation.

\subsection{AGD allocation}

Suppose every level uses the AGD rates in \Cref{ex:agd_rates}. The recursion
then becomes
\begin{equation}
R_{m+1}
=
\frac{\sqrt{4L_m/\mu_m}}{K_m}R_m+\Delta_m,
\qquad m=1,\ldots,M-1.
\label{eq:R_recursion_AGD}
\end{equation}

\begin{theorem}[Convex reformulation of the multi-level AGD allocation]
\label{thm:multilevel_gp}
Under the assumptions of \Cref{thm:multilevel_transfer}, use AGD at
each level and assume \(R_1>0\), so the initial point is not already
the level-1 minimizer. Allow positive real budgets at the first
\(M-1\) levels and omit integer rounding at the final level. The cost model is
\begin{equation}
\mathrm{Cost}_{\mathrm{WS}}(K_1,\ldots,K_{M-1})
=
\sum_{m=1}^{M-1}c_mK_m
+
c_M\sqrt{\frac{2L_M}{\varepsilon}}R_M,
\qquad K_m>0,
\label{eq:multi_agd_cost}
\end{equation}
where \(R_M\) is defined recursively by \eqref{eq:R_recursion_AGD}. This
model becomes a strictly convex function of
\(\vect{u}=(u_1,\ldots,u_{M-1})\in\R^{M-1}\) under the substitution
\(K_m=e^{u_m}\). It also tends to infinity as \(\|\vect{u}\|\to\infty\).
Therefore, \eqref{eq:multi_agd_cost} has a unique minimizer over positive
real budgets.
\end{theorem}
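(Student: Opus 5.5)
Unrolling the recursion \eqref{eq:R_recursion_AGD} is the first step, since it expresses \(R_M\) as a posynomial in the budgets. Write \(b_m:=\sqrt{4L_m/\mu_m}>0\). By induction,
\[
 R_M=R_1\prod_{m=1}^{M-1}\frac{b_m}{K_m}
 +\sum_{j=1}^{M-1}\Delta_j\prod_{m=j+1}^{M-1}\frac{b_m}{K_m},
\]
with the empty product equal to one. So \(R_M\) is a sum of monomials with nonnegative coefficients. The first has coefficient \(R_1\prod b_m>0\), and each monomial involves only negative powers of the \(K_m\). The cost \eqref{eq:multi_agd_cost} adds the positive linear terms \(c_mK_m\). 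Hence the whole objective is a posynomial, and the problem is an unconstrained geometric program.

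Next, substitute \(K_m=e^{u_m}\). Each monomial becomes \(C e^{\langle \vect{a},\vect{u}\rangle}\) with \(C\ge0\) and exponent vector \(\vect{a}\in\R^{M-1}\), and a composition of the exponential with an affine map is convex. The cost is a nonnegative combination of such terms, so it is convex in \(\vect{u}\). For strict convexity I would use the terms \(c_me^{u_m}\), \(m=1,\dots,M-1\). Their sum \(\sum_m c_m e^{u_m}\) has Hessian \(\mathrm{diag}(c_me^{u_m})\), which is positive definite. So \(\sum_m c_me^{u_m}\) is strictly convex, and adding the convex remainder keeps the total strictly convex. No logarithmic transformation is needed. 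Convexity of the cost itself, not its logarithm, is what gives uniqueness directly.

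Coercivity is the step I expect to need the most care. Suppose \(\|\vect{u}\|\to\infty\). If some \(u_m\to+\infty\) along a subsequence, then \(c_me^{u_m}\to\infty\). Otherwise all \(u_m\) are bounded above and some \(u_m\to-\infty\). The leading term \(R_1\prod_m b_m e^{-u_m}=R_1\prod b_m\,e^{-\sum_m u_m}\) is where \(R_1>0\) is used. When every coordinate is bounded above by some \(U\) and one coordinate tends to \(-\infty\), we have \(-\sum_m u_m\ge -(M-2)U-\min_m u_m\to\infty\). So this term blows up. A cleaner way to state the argument is as a single inequality:
\[
 \mathrm{Cost}\ge \sum_m c_m e^{u_m}+C_0e^{-\sum_m u_m},\qquad C_0>0 .
\]
Then show that the right-hand side is coercive, using the fact that \(\max\bigl(\max_m u_m,\,-\sum_m u_m\bigr)\ge \|\vect{u}\|_\infty/(M-1)\). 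This holds because if all \(u_m\le t\), then \(-\min_m u_m\le -\sum_m u_m+(M-2)t\).

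To conclude, the cost is a continuous, coercive, strictly convex function on \(\R^{M-1}\). Coercivity makes its sublevel sets compact, so a minimizer exists, and strict convexity makes it unique. Since \(u\mapsto e^u\) is a bijection onto \((0,\infty)\), this gives the unique minimizer over positive real budgets.
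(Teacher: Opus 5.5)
Your proposal is correct and follows essentially the same route as the paper. You unroll the recursion, substitute $K_m=e^{u_m}$, get strict convexity from the diagonal Hessian of $\sum_m c_m e^{u_m}$, and get coercivity by combining the upper bounds from the $c_m e^{u_m}$ terms with the lower bound on $\sum_m u_m$ from the $R_1>0$ term. The only difference is presentation: the paper shows that sublevel sets are bounded, whereas you package the same two bounds into the inequality $\max\bigl(\max_m u_m,\,-\sum_m u_m\bigr)\ge\|\vect{u}\|_\infty/(M-1)$.
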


\begin{proof}
Repeated substitution in the AGD recursion gives
\[
R_M
=
R_1\prod_{j=1}^{M-1}
\frac{\sqrt{4L_j/\mu_j}}{K_j}
+
\sum_{i=1}^{M-1}\Delta_i
\prod_{j=i+1}^{M-1}
\frac{\sqrt{4L_j/\mu_j}}{K_j},
\]
where an empty product equals one and \(B_j=\sqrt{4L_j/\mu_j}\) as in
\Cref{ex:agd_rates}. Now set \(K_m=e^{u_m}\), so
\(1/K_m=e^{-u_m}\). The variables \(u_m\) can be positive, zero, or
negative. Substitution into \eqref{eq:multi_agd_cost} gives
\[
\begin{aligned}
 \mathrm{Cost}_{\mathrm{WS}}(e^{u_1},\ldots,e^{u_{M-1}})
 ={}&\sum_{m=1}^{M-1}c_me^{u_m}\\
 &+c_M\sqrt{\frac{2L_M}{\varepsilon}}
 R_1\left(\prod_{j=1}^{M-1}B_j\right)e^{-\sum_{j=1}^{M-1}u_j}\\
 &+c_M\sqrt{\frac{2L_M}{\varepsilon}}
 \sum_{i=1}^{M-1}\Delta_i
 \left(\prod_{j=i+1}^{M-1}B_j\right)e^{-\sum_{j=i+1}^{M-1}u_j}
 .
\end{aligned}
\]
Every term is a nonnegative multiple of the
exponential of a linear function, and is convex for all real
\(\vect{u}\). Moreover, the terms \(c_me^{u_m}\) contribute the
positive-definite Hessian
\[
\operatorname{diag}
\bigl(c_1e^{u_1},\ldots,c_{M-1}e^{u_{M-1}}\bigr).
\]
Hence, the transformed objective is strictly convex.

To establish coercivity, consider a nonempty sublevel set with objective
at most \(C>0\). The terms \(c_me^{u_m}\) give the upper bounds
\(u_m\leq\log(C/c_m)\). The term
\[
 a_0\exp\left(-\sum_{j=1}^{M-1}u_j\right),
 \qquad
 a_0=c_M\sqrt{2L_M/\varepsilon}\,R_1
       \prod_{j=1}^{M-1}B_j>0,
\]
also gives \(\sum_j u_j\geq\log(a_0/C)\). For each component,
\[
 u_j\geq\log(a_0/C)-\sum_{\ell\ne j}\log(C/c_\ell).
\]
Every component therefore has both a lower and an upper bound on the
sublevel set. Continuity makes these sets compact, proving that the
objective tends to infinity as \(\|\vect{u}\|\to\infty\). Together with
strict convexity, this proves existence and uniqueness of the minimizer.
\end{proof}

The problem in logarithmic variables can be solved using standard
convex-optimization methods \cite{boyd2004}. For
\(j=1,\ldots,M-1\), the bound \(R_j\) depends only on the budgets
at preceding levels. Holding the other budgets fixed,
differentiating \eqref{eq:R_recursion_AGD} gives
\[
 \frac{\partial R_M}{\partial K_j}
 =-\frac{R_j}{K_j}
   \prod_{\ell=j}^{M-1}
   \frac{\sqrt{4L_\ell/\mu_\ell}}{K_\ell}.
\]
Setting the derivative of \eqref{eq:multi_agd_cost} to zero and
multiplying by \(K_j\) gives the following equality at the
minimizing budgets,
\begin{equation}
 c_jK_j^*
 =c_M\sqrt{\frac{2L_M}{\varepsilon}}\,R_j^*
   \prod_{\ell=j}^{M-1}\frac{\sqrt{4L_\ell/\mu_\ell}}{K_\ell^*},
 \label{eq:multilevel_balance}
\end{equation}
where \(R_j^*\) is the bound on the initial distance at level \(j\),
computed from the preceding budgets \(K_1^*,\ldots,K_{j-1}^*\), with
\(R_1^*=R_1\).
The right-hand side is the final-level cost due to error that passes
through level \(j\). Increasing its budget reduces this contribution.
The later terms \(\Delta_j,\ldots,\Delta_{M-1}\) are added after
the factor \(B_j/K_j\), so they do not enter this derivative.

\section{Numerical Experiments}
\label{sec:experiments}

In this section, we test the two-level AGD allocation on regularized
logistic regression applied to HIGGS data \cite{whiteson2014}. The experiments
address the following questions.
\begin{enumerate}[leftmargin=2em]
  \item How well does a fixed budget based on the initial gradient
  bound perform, and why does it sometimes assign too many
  low-fidelity iterations?

  \item Can updating the cost comparison during iteration improve
  performance?
\end{enumerate}

\subsection{Logistic regression with data-fidelity levels}
\label{subsec:higgs_setup}

For each fidelity level \(m\in\{1,2\}\), define
\begin{equation}
  f^{(m)}(\vect{x})
  =
  \frac{1}{n_m}
  \sum_{i=1}^{n_m}
  \log\bigl(1+\exp(-b_i \vect{a}_i^\top \vect{x})\bigr)
  +
  \frac{\lambda}{2}\|\vect{x}\|^2,
  \qquad \vect{x}\in\mathbb R^d,
  \label{eq:experiment_logistic}
\end{equation}
where \(\vect{a}_i\in\mathbb R^d\) is a feature vector,
\(b_i\in\{-1,+1\}\) is its label, and \(\lambda=10^{-2}\) is the regularization parameter.
Labels originally coded as \(y_i\in\{0,1\}\) are recoded as
\(b_i=2y_i-1\). This gives the same loss as binary cross-entropy.

The high-fidelity objective uses a fixed stratified sample of
\(n_2=110{,}000\) HIGGS observations with \(d=28\) features. The
low-fidelity objective uses a fixed subset of \(n_1\) of these observations.
Both levels therefore share the same parameter space and regularization,
while their empirical losses and minimizers generally differ.

Full-gradient evaluation dominates the iteration cost and requires
\(O(n_m d)\) operations. Since \(d\) is fixed, we normalize costs by
the high-fidelity sample size. Define the sample ratio and the cost weights by
\begin{equation}
  r:=\frac{n_1}{n_2},
  \qquad c_1=r,
  \qquad c_2=1.
  \label{eq:experiment_cost_weights}
\end{equation}
The subsets remain fixed throughout each run, so both optimization
stages use deterministic full gradients.

We start from \(\vect{x}_0=0\) and test
\begin{equation}
  r\in\{0.05,0.15,0.25\},
  \qquad
  \varepsilon\in\{10^{-3},10^{-4},10^{-5}\}.
  \label{eq:experiment_parameters}
\end{equation}
The high-fidelity sample is selected using seed \(1\). Three stratified
subset sequences are generated using seeds \(11,12,13\). Stratification
preserves the class proportions up to integer rounding. For each seed,
the observations at \(r=0.05\) are contained in those at \(r=0.15\),
which are contained in those at \(r=0.25\).
For each feature, we subtract its mean in the high-fidelity sample and
divide by its sample standard deviation. The low-fidelity subsets use
these same standardized feature values.

This gives nine low-fidelity objectives and 27 combinations of subset
seed, sample ratio, and target accuracy. Varying \(r\) changes cost,
the distance between minimizers, and potentially the curvature constants. The
experiment therefore measures their combined effect.

\subsection{Model properties and applicability of the theory}
\label{subsec:higgs_model_properties}

Let \(\mat{X}_m\in\mathbb R^{n_m\times d}\) contain the feature vectors as
rows, and define
\[
  \sigma_i(\vect{x})
  =
  \frac{1}{1+\exp(b_i \vect{a}_i^\top \vect{x})}.
\]
The Hessian of \eqref{eq:experiment_logistic} is
\begin{equation}
  \nabla^2 f^{(m)}(\vect{x})
  =
  \frac{1}{n_m}
  \sum_{i=1}^{n_m}
  \sigma_i(\vect{x})\bigl(1-\sigma_i(\vect{x})\bigr)\vect{a}_i \vect{a}_i^\top
  +
  \lambda \mat{I}.
  \label{eq:experiment_hessian}
\end{equation}
Since \(0<\sigma_i(\vect{x})<1\), the scalar inequality
\(t(1-t)\leq1/4\) gives
\[
  0\leq\sigma_i(\vect{x})\bigl(1-\sigma_i(\vect{x})\bigr)\leq\frac14,
  \qquad \vect{a}_i \vect{a}_i^\top\succeq0.
\]
The matrix inequality follows from
\(\vect{v}^{\top}\vect{a}_i\vect{a}_i^{\top}\vect{v}
=(\vect{a}_i^{\top}\vect{v})^2\geq0\) for every vector \(\vect{v}\).
We therefore obtain the uniform bounds
\begin{equation}
  \lambda \mat{I}
  \preceq
  \nabla^2 f^{(m)}(\vect{x})
  \preceq
  \lambda \mat{I}+\frac{1}{4n_m}\mat{X}_m^\top \mat{X}_m
  \preceq
  L_m \mat{I},
  \label{eq:experiment_hessian_bounds}
\end{equation}
where we choose the valid constants
\begin{equation}
  \mu_m=\lambda,
  \qquad
  L_m
  =
  \lambda+
  \frac{\lambda_{\max}(\mat{X}_m^\top \mat{X}_m)}{4n_m}.
  \label{eq:experiment_constants}
\end{equation}
Thus, \(f^{(m)}\) is \(\mu_m\)-strongly convex and \(L_m\)-smooth even
when \(\mat{X}_m\) is rank deficient. These are bounds on the Hessian
eigenvalues, not claims that every Hessian attains them.
Quadratic regularization makes the objective tend to infinity as
\(\|\vect{x}\|\to\infty\), so each level has a unique minimizer.

The constants in \eqref{eq:experiment_constants} make
\Cref{ex:agd_rates} applicable, with \(A_m=2L_m\), \(p=2\),
\(B_m=\sqrt{4L_m/\mu_m}\), and \(q=1\). Taking
\(\Delta_1=\norm{\vect{x}^{*(1)}-\vect{x}^{*(2)}}\) in the theoretical model gives
\begin{equation}
 \norm{\vect{x}_{K_1}^{(1)}-\vect{x}^{*(2)}}
 \leq\frac{\sqrt{4L_1/\mu_1}}{K_1}\norm{\vect{x}_0-\vect{x}^{*(1)}}+\Delta_1,
 \qquad K_1\geq1.
 \label{eq:experiment_transfer}
\end{equation}
The allocation rules use these bounds \(L_m\) and \(\mu_m\). The distance
diagnostics use numerical approximations to the minimizers, as described below.

\subsection{Algorithm and theoretical allocation}
\label{subsec:experiment_algorithm}

At each level, AGD starts from
\(\vect{y}_0^{(m)}=\vect{x}_0^{(m)}\) and \(t_0=1\), and iterates
\begin{align}
  \vect{x}_{k+1}^{(m)}
  &=
  \vect{y}_k^{(m)}
  -
  \frac{1}{L_m}\nabla f^{(m)}(\vect{y}_k^{(m)}),\\
  t_{k+1}
  &=
  \frac{1+\sqrt{1+4t_k^2}}{2},\\
  \vect{y}_{k+1}^{(m)}
  &=
  \vect{x}_{k+1}^{(m)}
  +
  \frac{t_k-1}{t_{k+1}}
  \bigl(\vect{x}_{k+1}^{(m)}-\vect{x}_k^{(m)}\bigr).
\end{align}
The point \(\vect{y}_k^{(m)}\) extrapolates from the two most recent
iterates. This extrapolation is the momentum step. Resetting momentum
means setting \(\vect{y}_0^{(m)}=\vect{x}_0^{(m)}\) and \(t_0=1\)
again, so no extrapolation from the previous objective is carried over.
After \(K_1\) low-fidelity iterations, the high-fidelity stage starts
from \(\vect{x}_0^{(2)}=\vect{x}_{K_1}^{(1)}\), with its momentum reset.
The choice \(K_1=0\) denotes direct high-fidelity AGD from \(\vect{x}_0\).

For positive integer \(K_1\), the required high-fidelity budget from the convergence bound is
\begin{equation}
  \overline K_2(K_1)
  =
  \sqrt{\frac{2L_2}{\varepsilon}}
  \left(
    \frac{\sqrt{4L_1/\mu_1}}{K_1}
    \|\vect{x}_0-\vect{x}^{*(1)}\|
    +
    \Delta_1
  \right).
  \label{eq:experiment_terminal_bound}
\end{equation}
By Corollary~\ref{thm:optimal_allocation_no_shorthand}, the low-fidelity budget
that minimizes the cost model is
\begin{equation}
  K_1^*
  =
  \sqrt{
    \frac{1}{r}
    \sqrt{\frac{8L_1L_2}{\mu_1\varepsilon}}
    \|\vect{x}_0-\vect{x}^{*(1)}\|
  }.
  \label{eq:experiment_theoretical_budget}
\end{equation}

The theoretical budget involves an unknown distance to a minimizer.
The two allocation rules below replace it with a gradient bound.
The first fixes the budget at the initial point. The second updates the
decision after each low-fidelity iteration.

To study the distances in Figure~\ref{fig:higgs_sensitivity}, we also
compute numerical approximations \(\vect{x}_{\mathrm{ref}}^{(m)}\) to
the minimizers, using damped Newton iterations until
\[
 \|\nabla f^{(m)}(\vect{x}_{\mathrm{ref}}^{(m)})\|\leq10^{-8}.
\]
By Proposition~\ref{prop:certificates} and \(\mu_m=10^{-2}\), these approximations
satisfy
\[
 \|\vect{x}_{\mathrm{ref}}^{(m)}-\vect{x}^{*(m)}\|\leq10^{-6},
 \qquad
 f^{(m)}(\vect{x}_{\mathrm{ref}}^{(m)})-f^{(m)}(\vect{x}^{*(m)})
 \leq5\times10^{-15}.
\]
The triangle inequality therefore bounds the error in a measured
distance to one minimizer by \(10^{-6}\), and the error in the measured
distance between two minimizers by \(2\times10^{-6}\). We use the
measured distances directly in the diagnostic plots; adding these
tolerances would give rigorous upper bounds. These numerical
approximations are used only for the plots and checks of objective
gaps. Their computation is excluded from the reported cost, and they
are not used to choose budgets or to stop the algorithms.

\subsection{Evaluation protocol}
\label{subsec:experiment_evaluation}

Every budget strategy in \Cref{subsec:budget_strategies} is evaluated
with the same high-fidelity stopping rule and the same cost measure,
defined here.

\subsubsection{Stopping criterion and observed cost}
We use the same stopping rule for every high-fidelity stage, with the
iteration count defined by
\begin{equation}
 K_{2,\mathrm{cert}}(K_1;\varepsilon)
 :=\min\left\{k\in\mathbb Z_{\geq0}:
 \frac{\|\nabla f^{(2)}(\vect{x}_k^{(2)})\|^2}{2\mu_2}
 \leq\varepsilon\right\}.
 \label{eq:experiment_first_hit}
\end{equation}
The test is checked from \(k=0\) and certifies the target accuracy
without a reference optimum.

We define the \emph{observed cost} by
\begin{equation}
 \mathrm{Cost}_{\mathrm{obs}}(K_1;\varepsilon)
 :=c_1K_1+c_2K_2,
 \qquad K_2=K_{2,\mathrm{cert}}(K_1;\varepsilon).
 \label{eq:experiment_observed_cost}
\end{equation}
Here \(K_1\) and \(K_2\) count completed iterations, and
\(c_1=r\), \(c_2=1\) follow \eqref{eq:experiment_cost_weights}.
The observed cost does not separately charge the gradients evaluated
for the switching and stopping tests. If each completed iteration is
also charged one gradient evaluation for these tests, the costs of online switching and direct AGD both double, so
their ratios are unchanged. The initial fixed budget and the best
tested budget, which need no switching test, then have median cost
ratios of \(1.504\) and \(0.389\).

\subsubsection{Performance measure}
For a budget \(K_1\), we report its observed cost relative to the direct
AGD cost
\(\mathrm{Cost}_{\mathrm{dir,obs}}(\varepsilon)
:=\mathrm{Cost}_{\mathrm{obs}}(0;\varepsilon)\),
\begin{equation}
 \mathrm{Cost\ ratio}
 =\frac{\mathrm{Cost}_{\mathrm{obs}}(K_1;\varepsilon)}
 {\mathrm{Cost}_{\mathrm{dir,obs}}(\varepsilon)}.
 \label{eq:experiment_selected_speedup}
\end{equation}
A ratio below one means that the budget costs less than direct AGD.
For example, a ratio of \(0.5\) means half the cost. We compute this
ratio for each configuration before taking the median across all 27
configurations. We also count how many configurations have a ratio
strictly below one.

\subsection{Budget strategies}
\label{subsec:budget_strategies}

We compare direct AGD with two choices of the low-fidelity budget.
The \emph{initial fixed budget} is computed once from the gradient at
\(\vect{x}_0\), as defined in \Cref{subsec:initial_certificate_allocation}.
\emph{Online switching} updates the decision during the low-fidelity
iterations, as defined in \Cref{subsec:online_cost_comparison}.
For the purpose of benchmarking, we also report the \emph{best tested
budget}, defined in \Cref{subsec:best_tested_budget} and selected after
all tested budgets have been evaluated.

\subsubsection{Best tested budget}
\label{subsec:best_tested_budget}
For each subset seed and sample ratio, let \(\mathcal K\) be the union
of tested budgets over the three targets. It includes zero, powers of
two, the fixed and online allocations, and budgets from exploratory
runs and local refinement. The complete set of tested budgets is
included in the reproduction archive (see Data and code availability).
The best tested budget is
\begin{equation}
 \widehat K_1^*
 \in\operatorname*{arg\,min}_{K_1\in\mathcal K}
 \mathrm{Cost}_{\mathrm{obs}}(K_1;\varepsilon).
 \label{eq:experiment_empirical_budget}
\end{equation}
This benchmark uses information from all tested budgets, excludes
the cost of searching them, and need not be optimal over all integer budgets.

\subsubsection{Initial fixed budget from the initial gradient}
\label{subsec:initial_certificate_allocation}

Substituting \(\|\nabla f^{(1)}(\vect{x}_0)\|/\mu_1\) for
\(\|\vect{x}_0-\vect{x}^{*(1)}\|\) in
\eqref{eq:experiment_theoretical_budget} gives the initial fixed budget
\begin{equation}
 K_{1,\mathrm{init}}
 =\left\lceil
 \sqrt{\frac1r\sqrt{\frac{8L_1L_2}{\mu_1\varepsilon}}
 \frac{\|\nabla f^{(1)}(\vect{x}_0)\|}{\mu_1}}
 \right\rceil.
 \label{eq:experiment_initial_certificate_budget}
\end{equation}
The initial fixed-budget algorithm is as follows.
\begin{algorithm}[Initial fixed budget]
\label{alg:initial_fixed_budget}
Initialize low-fidelity AGD at \(\vect{x}_0\).
\begin{enumerate}[leftmargin=2em]
 \item Compute \(K_{1,\mathrm{init}}\) from
 \eqref{eq:experiment_initial_certificate_budget}.
 \item Perform exactly \(K_{1,\mathrm{init}}\) low-fidelity AGD
 iterations. If the initial gradient is zero, this budget is zero
 and the low-fidelity stage is skipped.
 \item Use \(\vect{x}_{K_{1,\mathrm{init}}}^{(1)}\) as the high-fidelity
 initial point, reset high-fidelity momentum, and run until
 \eqref{eq:experiment_first_hit} is satisfied.
\end{enumerate}
\end{algorithm}

Table~\ref{tab:higgs_initial_fixed} reports the initial fixed budgets
and their observed cost ratios. The budgets range from 217 to 1714
iterations, and every cost ratio exceeds one.

The dependence on target accuracy follows the model prediction in
\eqref{eq:agd_K1_expanded}. With the sample ratio and subset seed fixed,
the initial budget before rounding is proportional to
\(\varepsilon^{-1/4}\). Reducing \(\varepsilon\) by a factor of ten
therefore increases this budget by \(10^{1/4}\approx1.78\), as seen in
Table~\ref{tab:higgs_initial_fixed}. We next examine why these budgets
assign so much low-fidelity work.

\begin{table}[!htbp]
 \centering
 \caption{Initial fixed budgets and observed costs relative to direct
 AGD. Each entry gives the minimum--maximum range over subset seeds
 11, 12, and 13. Cost ratios follow
 \eqref{eq:experiment_selected_speedup}.}
 \label{tab:higgs_initial_fixed}
 \small\renewcommand{\arraystretch}{1.2}
 \begin{tabular*}{\linewidth}{@{\extracolsep{\fill}}ccrr@{}}
 \toprule
 \(r\) & \(\varepsilon\) & \(K_{1,\mathrm{init}}\) & Cost ratio \\
 \midrule
 0.05 & $10^{-3}$ & 470--542 & 1.703--1.894 \\
 0.05 & $10^{-4}$ & 836--964 & 1.522--1.728 \\
 0.05 & $10^{-5}$ & 1486--1714 & 2.238--2.540 \\
 \addlinespace
 0.15 & $10^{-3}$ & 277--292 & 2.642--2.711 \\
 0.15 & $10^{-4}$ & 492--519 & 2.356--2.424 \\
 0.15 & $10^{-5}$ & 875--923 & 3.577--3.696 \\
 \addlinespace
 0.25 & $10^{-3}$ & 217--223 & 3.236--3.375 \\
 0.25 & $10^{-4}$ & 385--397 & 2.924--3.035 \\
 0.25 & $10^{-5}$ & 684--705 & 4.381--4.625 \\
 \bottomrule
 \end{tabular*}
\end{table}

\begingroup
\setlength{\intextsep}{8pt plus 2pt minus 2pt}
\begin{figure}[p]
 \centering
 \includegraphics[width=119mm]{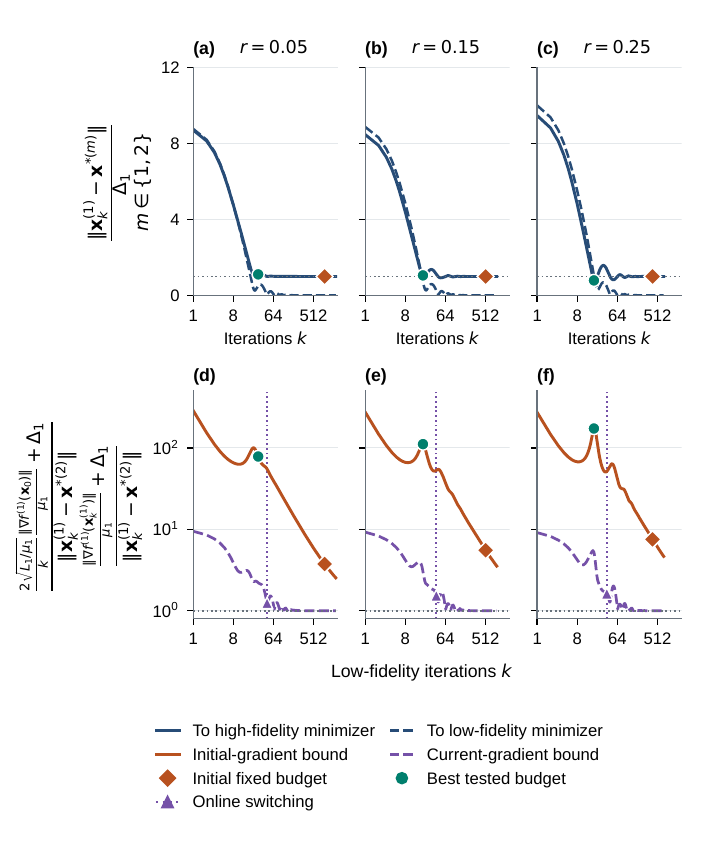}
 \caption{Distances and their bounds during low-fidelity AGD, for seed 11.
 Each horizontal axis counts low-fidelity iterations \(k\), and each
 column uses one sample ratio \(r\). The upper panels show distances
 to the two minimizers, divided by their distance \(\Delta_1\).
 The lower panels divide the initial-gradient bound
 \eqref{eq:experiment_initial_transfer_certificate} and the
 current-gradient bound
 \eqref{eq:experiment_current_transfer_certificate} by the measured
 distance to the high-fidelity minimizer. A lower-panel value of one
 means the bound equals that distance. Diamonds, triangles, and circles
 mark the initial fixed budget, the online switching position of
 \Cref{subsec:online_cost_comparison}, and the best tested budget at
 \(\varepsilon=10^{-4}\). Dotted vertical lines repeat the online
 switching positions.
 Horizontal axes and lower vertical axes are logarithmic. Distances
 are measured using the numerical minimizer approximations.}
 \label{fig:higgs_sensitivity}
\end{figure}
\endgroup

\subsubsection{Why the initial fixed budget is too large}
\label{subsec:distance_diagnostics}
Figure~\ref{fig:higgs_sensitivity} follows the low-fidelity AGD iterates
\(\vect{x}_k^{(1)}\) for subset seed 11, with one column for each sample
ratio \(r\). The horizontal axis counts completed low-fidelity
iterations \(k\). The upper panels plot the distances
\(\|\vect{x}_k^{(1)}-\vect{x}^{*(2)}\|\) and
\(\|\vect{x}_k^{(1)}-\vect{x}^{*(1)}\|\) to the high- and
low-fidelity minimizers, both divided by
\(\Delta_1=\|\vect{x}^{*(1)}-\vect{x}^{*(2)}\|\). The lower panels
plot two upper bounds on \(\|\vect{x}_k^{(1)}-\vect{x}^{*(2)}\|\),
defined in \eqref{eq:experiment_initial_transfer_certificate} and
\eqref{eq:experiment_current_transfer_certificate} below, each divided
by the measured distance, so a value of one means that the bound equals
the distance.

The upper panels show why further low-fidelity iterations can bring
little improvement to the high-fidelity starting point. The distance to
the high-fidelity minimizer has nearly stabilized while the iterates
continue to approach the low-fidelity minimizer. These distances
approach \(\Delta_1\) and zero, respectively, so the normalized
limiting high-fidelity distance equals one for all three sample ratios.

The lower panels compare the two upper bounds
\eqref{eq:experiment_initial_transfer_certificate} and
\eqref{eq:experiment_current_transfer_certificate} on the distance to
the high-fidelity minimizer. The bound used by the initial fixed budget is
\begin{equation}
 \|\vect{x}_k^{(1)}-\vect{x}^{*(2)}\|
 \leq
 \frac{2\sqrt{L_1/\mu_1}}{k}
 \frac{\|\nabla f^{(1)}(\vect{x}_0)\|}{\mu_1}
 +\Delta_1,\qquad k\geq1.
 \label{eq:experiment_initial_transfer_certificate}
\end{equation}
It combines the gradient bound at the initial point with the AGD
convergence bound; we call its right-hand side the
\emph{initial-gradient bound}. Applying the gradient bound at the
current point instead gives
\begin{equation}
 \|\vect{x}_k^{(1)}-\vect{x}^{*(2)}\|
 \leq
 \frac{\|\nabla f^{(1)}(\vect{x}_k^{(1)})\|}{\mu_1}
 +\Delta_1,
 \label{eq:experiment_current_transfer_certificate}
\end{equation}
whose right-hand side we call the \emph{current-gradient bound}.
Both bounds use the same \(\Delta_1\) and triangle inequality. Only the
estimate of the distance to the low-fidelity minimizer changes.

The initial-gradient bound remains large after the measured
distance has stabilized, and its \(1/k\) term keeps decreasing. The cost
model counts this decrease as a saving in high-fidelity work and
therefore assigns too large a budget.

The current-gradient bound approaches the measured distance much
earlier. The current gradient thus gives a better indication of when
further low-fidelity iterations offer little improvement to the
high-fidelity initial point. The online switching rule introduced in
\Cref{subsec:online_cost_comparison} uses the current gradient to
switch near this region. The vertical lines and
triangles mark where it switches at \(\varepsilon=10^{-4}\), at
\(k=46\), \(40\), and \(37\) for the three sample ratios. These
positions are much closer to the best tested budgets, shown by circles,
than the initial fixed budgets are.

\subsubsection{Online switching by comparing remaining costs}
\label{subsec:online_cost_comparison}
\begingroup
\setlength{\abovedisplayskip}{9pt plus 2pt minus 2pt}
\setlength{\belowdisplayskip}{9pt plus 2pt minus 2pt}
\setlength{\abovedisplayshortskip}{3pt plus 2pt}
\setlength{\belowdisplayshortskip}{6pt plus 2pt minus 2pt}
At each low-fidelity iterate, the method can switch to the
high-fidelity objective now or perform additional low-fidelity
iterations and switch later. We compare the model cost of the
remaining work under these two options and switch as soon as switching now
is no more expensive than the best continuation. Work already done is
common to both options and does not enter the comparison. The current gradient provides an
estimate of the remaining distance to the low-fidelity minimizer,
from which we construct both cost models.

At the current low-fidelity iterate \(\vect{x}_k^{(1)}\), define the
gradient bound on its distance to the low-fidelity minimizer and a
real-valued budget for additional low-fidelity iterations by
\begin{align}
 d_k&:=\frac{\|\nabla f^{(1)}(\vect{x}_k^{(1)})\|}{\mu_1},
 \label{eq:experiment_online_distance}\\
 K_1^*(\vect{x}_k^{(1)})
 &:=\sqrt{\frac{c_2}{c_1}\sqrt{\frac{8L_1L_2}{\mu_1\varepsilon}}d_k}.
 \label{eq:experiment_online_budget}
\end{align}

The cost models for switching immediately and for restarting
low-fidelity AGD for an additional real budget \(s>0\) are
\begin{equation}
 \mathrm{Cost}_{\mathrm{switch}}(\vect{x}_k^{(1)})
 =c_2\sqrt{\frac{2L_2}{\varepsilon}}(d_k+\Delta_1),
 \label{eq:experiment_switch_cost}
\end{equation}
\begin{equation}
 \mathrm{Cost}_{\mathrm{continue}}(s;\vect{x}_k^{(1)})
 =c_1s+c_2\sqrt{\frac{2L_2}{\varepsilon}}
 \left(\frac{2\sqrt{L_1/\mu_1}}{s}d_k+\Delta_1\right).
 \label{eq:experiment_continue_cost}
\end{equation}
The continuation model \eqref{eq:experiment_continue_cost} is the
two-level cost model \eqref{eqn:warmstartcost} of
Corollary~\ref{thm:optimal_allocation_no_shorthand} with the initial point
\(\vect{x}_0\) replaced by \(\vect{x}_k^{(1)}\) and the distance
\(\norm{\vect{x}_0-\vect{x}^{*(1)}}\) replaced by its bound \(d_k\).
For \(d_k>0\), Corollary~\ref{thm:optimal_allocation_no_shorthand} therefore
gives its unique minimizer \(s=K_1^*(\vect{x}_k^{(1)})\), which is
\eqref{eq:experiment_online_budget}, and by
\eqref{eq:Cost_star_no_shorthand} the minimum value
\begin{equation}
 \min_{s>0}\mathrm{Cost}_{\mathrm{continue}}(s;\vect{x}_k^{(1)})
 =2c_1K_1^*(\vect{x}_k^{(1)})
 +c_2\sqrt{\frac{2L_2}{\varepsilon}}\Delta_1.
 \label{eq:experiment_min_continue_cost}
\end{equation}

The model favors immediate switching exactly when
\begin{equation}
 \begin{aligned}
 &\mathrm{Cost}_{\mathrm{switch}}(\vect{x}_k^{(1)})
 \leq\min_{s>0}\mathrm{Cost}_{\mathrm{continue}}(s;\vect{x}_k^{(1)})\\
 &\quad\Longleftrightarrow\quad
 c_2\sqrt{\frac{2L_2}{\varepsilon}}d_k
 \leq2c_1K_1^*(\vect{x}_k^{(1)}).
 \end{aligned}
 \label{eq:experiment_online_switch}
\end{equation}
The term \(c_2\sqrt{2L_2/\varepsilon}\,\Delta_1\) appears in both costs
and cancels, so the comparison needs neither \(\Delta_1\) nor either
minimizer. For \(d_k>0\), substituting \eqref{eq:experiment_online_budget} gives
\begin{equation}
 \begin{aligned}
 &\frac{c_1[K_1^*(\vect{x}_k^{(1)})]^2}{2\sqrt{L_1/\mu_1}}
 \leq2c_1K_1^*(\vect{x}_k^{(1)})\\
 &\quad\Longleftrightarrow\quad
 K_1^*(\vect{x}_k^{(1)})\leq4\sqrt{L_1/\mu_1}.
 \end{aligned}
 \label{eq:experiment_online_threshold}
\end{equation}
If \(d_k=0\), the low-fidelity minimizer has already been reached and
the rule switches immediately.
At equality in \eqref{eq:experiment_online_threshold}, the
cost-minimizing real budget for a restarted continuation is
\(s=4\sqrt{L_1/\mu_1}\), and its distance-bound term in
\eqref{eq:experiment_continue_cost} equals \(d_k/2\). For \(d_k>0\),
the rule therefore switches exactly when the cost-minimizing restarted
continuation would leave a distance-bound term of at least \(d_k/2\),
that is, when it would remove at most half of the current bound. By
\eqref{eq:experiment_online_distance} and
\eqref{eq:experiment_online_budget}, the condition
\eqref{eq:experiment_online_threshold} is also equivalent to
\(\|\nabla f^{(1)}(\vect{x}_k^{(1)})\|^2/(2\mu_1)
\leq16(c_1/c_2)^2(L_1/L_2)\,\varepsilon\), which covers \(d_k=0\) as
well.

The online switching algorithm is as follows.
\begin{algorithm}[Online switching]
\label{alg:online_cost_comparison}
Initialize low-fidelity AGD at \(\vect{x}_0\).
\begin{enumerate}[leftmargin=2em]
 \item At each low-fidelity iterate
 \(\vect{x}_k^{(1)}\), compute \(d_k\) and
 \(K_1^*(\vect{x}_k^{(1)})\). Stop the low-fidelity stage at the first
 iterate at which \(K_1^*(\vect{x}_k^{(1)})\leq4\sqrt{L_1/\mu_1}\), the
 switching condition in \eqref{eq:experiment_online_threshold}, or at
 which \(d_k=0\). Otherwise,
 perform one more low-fidelity AGD iteration, retaining its momentum,
 and repeat the comparison.
 \item Use the final low-fidelity iterate as the high-fidelity initial
 point, reset high-fidelity momentum, and run until
 \eqref{eq:experiment_first_hit} is satisfied.
\end{enumerate}
\end{algorithm}

The comparison uses the current gradient and known curvature
constants. It judges low-fidelity accuracy through the cost model and
does not directly measure changes in the distance to the high-fidelity
minimizer. Its derivation assumes that low-fidelity AGD would restart
at the current point, whereas the implementation retains the existing
momentum when it continues. The rule is therefore a heuristic based
on the cost model. It provides an estimate of the switching position,
and we assess its performance using observed costs.
\endgroup

\subsection{Allocation performance}
\label{subsec:higgs_results}

Table~\ref{tab:higgs_initial_online} compares the initial fixed budget
and online switching using the same groups as
Table~\ref{tab:higgs_initial_fixed}. Online switching uses fewer
low-fidelity iterations and has a lower observed cost in every
configuration.

\begin{table}[!htbp]
 \centering
 \caption{Initial fixed budgets and online switching. Each entry gives
 the minimum--maximum range over subset seeds 11, 12, and 13.
 \(K_1\) counts completed low-fidelity iterations. Cost ratios are
 relative to direct AGD, as defined in
 \eqref{eq:experiment_selected_speedup}.}
 \label{tab:higgs_initial_online}
 \small\setlength{\tabcolsep}{4pt}\renewcommand{\arraystretch}{1.2}
 \begin{tabular*}{\linewidth}{@{\extracolsep{\fill}}ccrrrr@{}}
 \toprule
 & & \multicolumn{2}{c}{Initial fixed budget}
 & \multicolumn{2}{c}{Online switching}\\
 \cmidrule(lr){3-4}\cmidrule(l){5-6}
 \(r\) & \(\varepsilon\) & \(K_1\) & Cost ratio
 & \(K_1\) & Cost ratio\\
 \midrule
 0.05 & $10^{-3}$ & 470--542 & 1.703--1.894 & 39--41 & 0.447--0.608 \\
 0.05 & $10^{-4}$ & 836--964 & 1.522--1.728 & 44--46 & 0.342--0.450 \\
 0.05 & $10^{-5}$ & 1486--1714 & 2.238--2.540 & 63--66 & 0.388--0.694 \\
 \addlinespace
 0.15 & $10^{-3}$ & 277--292 & 2.642--2.711 & 20--20 & 0.500--0.500 \\
 0.15 & $10^{-4}$ & 492--519 & 2.356--2.424 & 39--40 & 0.385--0.500 \\
 0.15 & $10^{-5}$ & 875--923 & 3.577--3.696 & 61--62 & 0.575--0.674 \\
 \addlinespace
 0.25 & $10^{-3}$ & 217--223 & 3.236--3.375 & 18--18 & 0.528--0.583 \\
 0.25 & $10^{-4}$ & 385--397 & 2.924--3.035 & 34--37 & 0.431--0.646 \\
 0.25 & $10^{-5}$ & 684--705 & 4.381--4.625 & 42--43 & 0.589--0.685 \\
 \bottomrule
 \end{tabular*}
\end{table}

Table~\ref{tab:higgs_allocation_summary} compares the two allocation
rules with the best tested budget. The initial fixed budget costs more
than direct AGD in every configuration. Online switching costs less
in all 27 configurations, with a median cost ratio of \(0.528\).
The best tested budget gives a median ratio of \(0.463\), using
information from the complete tested set.

\begin{table}[!htbp]
 \centering
 \caption{Observed cost across the 27 configurations. The cost ratio
 is method cost divided by direct AGD cost. A ratio below one means
 less cost than direct AGD. The best tested budget is a retrospective
 benchmark.}
 \label{tab:higgs_allocation_summary}
 \small\setlength{\tabcolsep}{5pt}\renewcommand{\arraystretch}{1.2}
 \begin{tabular*}{\linewidth}{@{\extracolsep{\fill}}lrr@{}}
 \toprule
 Method or benchmark & \shortstack{Median cost\\ratio}
 & \shortstack{Configurations\\with less cost}\\
 \midrule
 Initial fixed budget & 2.694 & 0/27 \\
 Online switching & 0.528 & 27/27 \\
 Best tested budget & 0.463 & 27/27 \\
 \bottomrule
 \end{tabular*}
\end{table}

\newpage
Figure~\ref{fig:higgs_allocation_cost} compares online switching with
the best tested budget at \(\varepsilon=10^{-4}\).

In all three panels, online switching stops in the same early-iteration
region as the best tested budget. The initial fixed budgets lie much
farther to the right, where additional low-fidelity work increases
total cost. The online and best-tested costs are particularly close
at \(r=0.05\); the gap is larger at \(r=0.15\) and \(r=0.25\).
Thus, online switching finds a switching region close to the best
tested budget without evaluating future budgets.

\begin{figure}[!htbp]
 \centering
 \includegraphics[width=119mm]{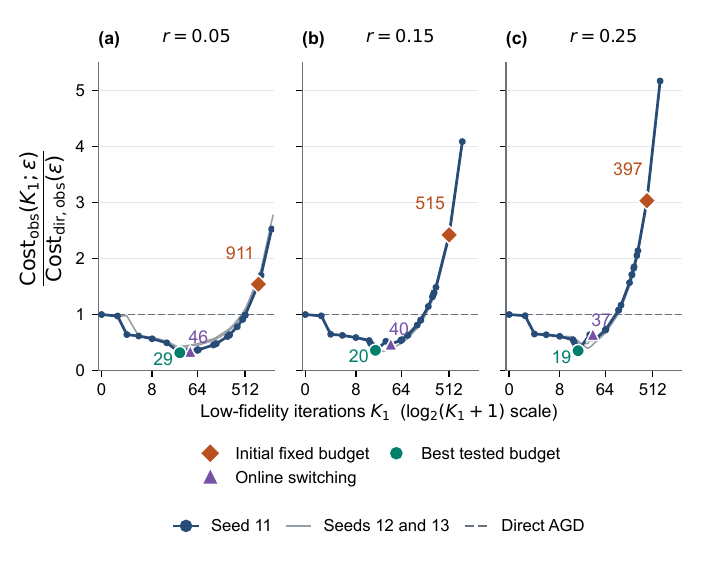}
 \caption{Observed cost relative to direct AGD at \(\varepsilon=10^{-4}\).
 The curve with dots is seed 11, and the thin curves are seeds 12 and
 13. Diamonds, triangles, and
 large circles mark the initial fixed budget, online switching, and
 best tested budget, labeled by their low-fidelity iteration counts.
 All costs follow \eqref{eq:experiment_observed_cost}. Values below
 one improve on direct AGD. The horizontal coordinate is
 \(\log_2(K_1+1)\), with ticks showing \(K_1\).}
 \label{fig:higgs_allocation_cost}
\end{figure}

\FloatBarrier
\section{Conclusion}
\label{sec:conclusion}

Low-fidelity iterations are worthwhile when they reduce the total cost
of reaching the required high-fidelity accuracy. Additional iterations
at one level incur a cost but can improve the starting point for the
next level. We studied how to
choose the number of iterations at each level
before switching to a more accurate objective. For smooth, strongly
convex objectives, convergence bounds give a model of this trade-off.
The model has a unique minimizing budget for two levels when the
initial low-fidelity error is nonzero. For AGD, the two-level budgets
have closed forms, and the multi-level problem can be written as a
strictly convex optimization problem.
The two terms in the bound \eqref{eq:warmstart_bound_poly} play roles
similar to the algebraic and discretization errors in adaptive finite
element methods, where the iterative solver on each mesh can be
stopped once a norm of the algebraic residual is bounded by a multiple
of an a posteriori estimate of the discretization
error~\cite{arioli2013}. Unlike that stopping criterion, which compares
only the two errors, our allocation also accounts for the iteration
costs of the two levels.

The theoretical budgets depend on unknown distances to minimizers,
which we estimate using gradient bounds. Conservative initial
estimates motivate updating the switching decision using the current
gradient. In the HIGGS experiment, budgets computed from the initial
gradient bound spend too
many iterations on the low-fidelity objective and never improve on
direct AGD in the tested configurations. Online switching uses the
current gradient to update the cost comparison and reduces cost in all
27 configurations. Its median cost is 52.8\% of direct AGD cost,
compared with 46.3\% for the best tested budget. This supports the use
of updated information, while also showing that minimizing a
conservative cost model need not minimize observed work.
In Figure~\ref{fig:higgs_sensitivity}, the initial-gradient bound
keeps decreasing after further low-fidelity iterations have nearly
stopped improving the high-fidelity starting point, so a bound that guarantees
accuracy need not predict the value of further computation.

Further experiments are needed to assess how well the allocation
strategy generalizes across optimization problems. Future work should
also consider how to combine it with different optimization methods.
For AGD, this includes using momentum information in the switching
decision and accounting for the stopping rule when estimating the
remaining cost. More generally, future work should identify computable
predictors of how further low-fidelity iterations change the
high-fidelity stopping time in \eqref{eq:experiment_first_hit}.
Another direction is derivative-free optimization,
where the allocation would need a cost model based on function
evaluations and a computable measure of progress that does not require
gradients.

\backmatter
\section*{Statements and Declarations}

\subsection*{Funding}
This research was supported by the Natural Sciences and Engineering
Research Council of Canada (NSERC), Discovery Grant \#2023-03555,
and the University of British Columbia.

\subsection*{Competing interests}
The authors have no relevant financial or non-financial interests to disclose.

\subsection*{Author contributions}
All authors contributed to the study conception and design.
Material preparation, data collection, and analysis were performed by
Zhongda Huang. The first draft of the manuscript was written by
Zhongda Huang, and all authors commented on previous versions of the
manuscript. All authors read and approved the final manuscript.

\subsection*{Data and code availability}
The source HIGGS data are publicly available from the UCI Machine
Learning Repository~\cite{whiteson2014}. The processed sample, experiment code, and saved numerical results
used in this study are available in the
\href{https://github.com/ViggleH/multifidelity-warmstart/releases/tag/manuscript-2026-09-11}{versioned GitHub reproduction archive}
(\texttt{manuscript-2026-09-11}).

\subsection*{Use of generative AI tools}
During the preparation of this work, the authors used generative AI
tools from OpenAI (ChatGPT, Codex) and Anthropic (Claude) to assist with
coding, data analysis, checking derivations and citations, drafting and
editing text, and manuscript consistency checks. AI tools were not used
to generate data or figures. The authors reviewed and verified all
content and take full responsibility for the final publication.

\begingroup
\interlinepenalty=10000
\bibliography{references}
\endgroup
\end{document}